\newtheorem{theorem}{Theorem}
\newtheorem{corollary}[theorem]{Corollary}
\newtheorem{convention}[theorem]{Convention}
\newtheorem{proposition}[theorem]{Proposition}
\newtheorem{remark}[theorem]{Remark}
\numberwithin{equation}{section}
\numberwithin{theorem}{section}
\theoremstyle{definition}
\newtheorem{example}{Example}
\numberwithin{example}{section}
\newtheorem{definition}[theorem]{Definition}
\title{Hamilton-Jacobi-Bellman equations for Chemical Reaction Networks}
\author{Michael Snarski}
\begin{document}
\maketitle

\begin{abstract}
This is an expository note on large deviations, Hamilton-Jacobi-Bellman (HJB) equations, and the role of the Freidlin-Wentzell quasipotential in chemical reaction networks (CRNs). The note was motivated by observations in \cite{AndersonLyapunov15} which identified Lyapunov functions for deterministic descriptions of CRNs by taking appropriate scaling limits of invariant distributions for the corresponding stochastic dynamics. We explain how this is a special case of a classical theory due to Freidlin and Wentzell \cite{FreidlinWentzell}. We also show that this Lyapunov function is a solution to the HJB partial differential equation if and only if the network is ``complex-balanced''. The target audience are researchers in the CRN community who are familiar with the Markov process description of CRNs, but do not have the time to invest in learning the technical machinery of large deviations.  We conclude by exploring some possible relationships with K{\"a}hler geometry which suggest an interesting and unexplored research direction. 
\end{abstract}

\section{Introduction}

Chemical reaction networks (CRNs) attempt to model the behavior of chemical
species interacting in a well-stirred homogeneous medium of large but finite volume. 
Inflow and outflow of species is allowed, and we assume that the time-scale
over which particles meet is ``sufficiently small'' so that they are
guaranteed to interact ``often''. Under such assumptions every particle of the
same species experiences identical conditions. As such, CRNs do not
track the spatial location of any individual particle but rather provide only
macroscopic information. In this note, the macroscopic description will be either a
discrete number of a certain type of species (if a \emph{stochastic} description of
the system is employed) or a concentration value, which is a non-negative real
number (for a \emph{deterministic} description).

Our discrete, stochastic description of CRNs is a sequence of jump
Markov processes taking values in increasingly finer lattice subsets of
$\mathbb{R}^n_+$. The sequence is indexed by a scaling parameter that
represents the volume of the container. We will denote the scaling parameter
by $n \geqslant 1$, and the sequence of processes by $\{ X^n (\cdot) \}_{n
\geqslant 1}$. Under appropriate conditions on the jump intensities, this
sequence has a Law of Large Numbers {\cite{KurtzLLN77}}{\cite{KurtzCRN72}}
limit which we denote by $x (\cdot)$, a result we sometimes refer to as
\emph{Kurtz's theorem}. The limit $x (\cdot)$ is absolutely continuous and
can be characterized as the unique solution to a system of coupled ordinary
differential equations; this defines the continuous,
deterministic description {\cite{KurtzCRN72}}.

The focus of this paper is to explain some aspects of the relationship between the
stochastic and deterministic descriptions. An important class of networks for
us will be the \emph{complex-balanced} ones, introduced by Horn
and Jackson {\cite{HornJackson72}}. They showed that if $c \in
\mathbb{R}^d_{> 0}$ is a steady state for the deterministic dynamics of a
complex-balanced network, then the function of $x$ defined by
\begin{equation}
  V (c, x) \doteq \sum_{i = 1}^d c_i \ell \left( \frac{x_i}{c_i} \right),
  \quad \ell (x) \doteq x \log x - x + 1 \label{eqn:lyapunov_function}
\end{equation}
is a Lyapunov function for the deterministic dynamics.

It is already known that there is a relationship between the Lyapunov
function $V$ and the stochastic dynamics. In {\cite{AndersonLyapunov15}},
the authors explicitly write down the stationary distribution associated to
the Markov process $X^n$ using results from {\cite{AndersonProductForm10}},
and then take an appropriate scaling limit of the stationary distributions to
obtain $V$. \

It turns out that this procedure is a special case of a well-established
result in a subfield of probability known as Freidlin-Wentzell theory. The
trouble with this theory is that it can seem prohibitively technical to the
non-specialist. The goal of the present paper is to explain in the simplest
terms some implications of the theory, communicate the intuition behind it,
and provide ways of leveraging the results without knowing all the details.

Here is one contribution of the approach. We will associate to any CRN a
function $L : \mathbb{R}^d_+ \times \mathbb{R}^d$, called the \emph{local
rate} or \emph{Lagrangian}, and study the family of variational problems
\begin{equation}\label{eqn:quasipotential}
 Q (c, x) = \inf_{\varphi} \left\{ \int_0^T L (\varphi (s), \dot{\varphi}
   (s)) \text{ds} : \varphi (0) = c, \varphi (T) = x, T < \infty \right\}, 
\end{equation}
where the infimum is over all absolutely continuous paths $\varphi : [0, T]
\rightarrow \mathbb{R}^d_+$. We will show that $Q (c, x)$ is always a Lyapunov
function for the deterministic dynamics, and $Q (c, x) = V (c, x)$ if and only
if the network is complex-balanced.

A key part of our exposition is to emphasize a ``Hamiltonian description'' of a
chemical reaction network. In the case of mass-action kinetics, there is a
one-to-one correspondence between reaction networks and their naturally
associated Hamiltonians. In turn, each Hamiltonian defines an associated partial differential equation, known as a Hamilton-Jacobi-Bellman (HJB) equation. We will demonstrate how the steady-state HJB can be used to study the stability of both the stochastic and deterministic descriptions. 

The layout of the paper is as follows. Section \ref{sec:crn_hamiltonian}
serves to define the most basic objects in the paper, the CRNs and their
associated Hamiltonian. In Section \ref{sec:ldp_to_classical_mechanics} we
sketch the basic framework connecting CRNs to large deviations and classical
mechanics. We do so in three steps: in Section \ref{sec:crn_to_lln} we show
how the Hamiltonian encodes the Law of Large Numbers (LLN) limit; in Section
\ref{sec:lln_to_ldp} we study the rate of convergence of the LLN and introduce
the notion of a Large Deviations Principle (LDP); and in Section
\ref{sec:ldp_to_hjb} we show how the LDP is related to the
HJB equation through a family of variational
problems.

Once we have established the main objects of large deviations theory, we
focus in Section \ref{sec:quasipotential} on a specific object known as the
\emph{quasipotential}, defined above in \eqref{eqn:quasipotential}. In Section \ref{sec:quasipotential_lyapunov} we
show that the quasipotential serves as a Lyapunov function, and in Section
\ref{sec:quasipotential_invariant} we show that the invariant measure
technique employed in {\cite{AndersonLyapunov15}} identifies the
quasipotential. It is therefore automatic that all the functions identified in
{\cite{AndersonLyapunov15}} are Lyapunov functions. In Section
\ref{sec:quasipotential_hjb}, we show that the quasipotential can be
identified using a partial differential equation known as the
Hamilton-Jacobi-Bellman (HJB) equation.

The results described so far hold for general reaction networks with mass-action kinetics. In Section
\ref{sec:complex_balanced} we specialize the results to complex-balanced
networks and show that the function $V$ solves the HJB equation if and only if
the network is complex-balanced. Finally, Section \ref{sec:examples} shows
some worked examples. Section \ref{sec:examples_anderson} aims to demonstrate
how the HJB technique can be simpler than the one employed in
{\cite{AndersonLyapunov15}}. On the other hand, Section
\ref{sec:examples_kahler} shows a curious coincidence between the form of $V$
and the form of a potential for a K{\"a}hler metric obtained from the
combinatorial data of a polytope in {\cite{Abreu08}}. We suspect that there is
a deeper connection between the two, and that this is an interesting area of
research at the intersection of toric geometry and convex analysis. In
particular, the fan description of a toric variety may be closely related to
the fans used in work of Craciun {\cite{Craciun15}}.

We re-iterate that this paper serves as
an explanatory guide, and contains only a few novel observations. In particular, we do not go into detail about the theory of large deviations or viscosity solutions to HJB equations. A good rigorous reference on the geometry of CRNs within
the context of Freidlin-Wentzell theory is {\cite{Agazzi18u}}. The definitive
book on Freidlin-Wentzell theory is {\cite{FreidlinWentzell}}. The topic of solutions to HJB equations is a vast and subtle one and relies heavily on the seminal work of Crandall and Lions on viscosity solutions \cite{Crandall83} (see also \cite{Bardi97} and \cite{Feng06}). 

\

\section{CRNs and their Hamiltonians}\label{sec:crn_hamiltonian}

In this section we define CRNs and their associated Hamiltonians for the case
of mass-action kinetics. \

\subsection{Reaction networks}

\begin{definition}
  Let $\mathcal{A}= \{ A_1, \ldots, A_d \}$, let $\mathcal{C}$ be a finite
  subset of $\mathbb{Z}^d_+$, and let $\mathcal{R} \subset \mathcal{C} \times
  \mathcal{C}$ be a subset of ordered pairs of complexes. To each ordered pair
  $(a, b) \in \mathcal{R}$ we associate the vector $\zeta_{a, b} \doteq b - a
  \in \mathbb{Z}^d$. The elements $A_i \in \mathcal{A}$ are called
  {\emph{species}}, the $a \in \mathcal{C}$ are called {\emph{complexes}}, the
  vectors $\zeta_{a, b}$ are called {\emph{reaction vectors}}, and the tuple
  $G = (\mathcal{A}, \mathcal{C}, \mathcal{R})$ is called a {\emph{chemical
  reaction network (CRN)}}.
\end{definition}

\begin{definition}\label{def:domain}
  Let $G = (\mathcal{A}, \mathcal{C}, \mathcal{R})$ be a CRN. The subspace
  \[ \mathcal{S}_0 = \left\{ \sum_{(a, b) \in \mathcal{R}} c_{a, b} \zeta_{a,
     b} : c_{a, b} \in \mathbb{R}, (a, b) \in \mathcal{R} \right\} \]
  is called the {\emph{stoichiometric subspace}} of the network. For a given
  initial condition $x (0) \in \mathbb{R}^d_{\geqslant 0}$, we let
  \[ \mathcal{S}_{x (0)} \doteq \mathbb{R}^d_{\geqslant 0} \cap
     (\mathcal{S}_0 + x (0)) =\mathbb{R}^d_{\geqslant 0} \cap \{ s + x (0) : s
     \in \mathcal{S}_0 \}, \]
     and we refer to this as the stoichiometric compatibility class passing through $x(0)$, or simply as the domain of the network.  When the particular initial condition $x (0)$ is irrelevant, or when
  $\mathcal{S}_x$ is the same for all choices of $x$, we will denote the domain by $\mathcal{S}$.
\end{definition}

We will use the following notation throughout.

\begin{convention}
  We use the convention of exponential notation for $x \in \mathbb{R}^d_+$ and
  $a \in \mathbb{Z}^d_+$
  \[ x^a = x_1^{a_1} \cdots x_d^{a_d} . \]
\end{convention}
\begin{remark}
We will strive to use $x$ to denote a point in $\mathcal{S} \subset \mathbb{R}^d$ and $x(\cdot)$ to denote a path $x(\cdot) \doteq \{x(t), t\in[0, T]\}$. We do not call this a convention, because we may not have been stringent enough in enforcing it, and hopefully no confusion will arise. 
\end{remark}

To a CRN $G = (\mathcal{A}, \mathcal{C}, \mathcal{R})$ we associate \emph{rate constants} $\kappa = \{ \kappa_{a, b} > 0 : (a, b) \in \mathcal{R} \}$ which are necessary to define the stochastic and deterministic dynamics. Given a reaction network $G$ and rate constants $\kappa$, we 
associate an object we will call the \emph{Hamiltonian} of the reaction
network. This is an object which is central in providing the link between the
stochastic and deterministic dynamics.

\begin{definition}
  Let $G = (\mathcal{A}, \mathcal{C}, \mathcal{R})$ be a CRN with associated
  rate constants $\kappa = \{ \kappa_{a, b} > 0 : (a, b) \in \mathcal{R} \}$.
  We define the {\emph{Hamiltonian}} associated to $G$ by
  \begin{equation}
    H_G (x, p) \doteq \sum_{(a, b) \in \mathcal{R}} \kappa_{a, b} x^a 
    (e^{\langle \zeta_{a, b}, p \rangle} - 1) . \label{eqn:hamiltonian}
  \end{equation}
  The set of all functions $H : \mathbb{R}^d_+ \times \mathbb{R}^d \rightarrow
  \mathbb{R}$ of the form {\eqref{eqn:hamiltonian}} with $\kappa_{a, b} > 0$,
  $a \in \mathbb{Z}^d_+$ and $\zeta_{a, b} \in \mathbb{Z}^d$ is in one-to-one
  correspondence with the set of CRNs with mass-action kinetics. \ 
\end{definition}

\begin{example}
  \label{ex:guiding_example}Consider the one-dimensional reaction network \
  with a single species $\mathcal{A}= \{ A \}$, two complexes $\mathcal{C}= \{
  0, 1 \}$ and two reactions $\mathcal{R}= \{ (0, 1), (1, 0) \}$. To this CRN
  we associate the rate constants $\kappa_{0, 1} = \kappa_{1, 0} = 1$. This
  corresponds to the reaction network
  \[ A \rightleftarrows \emptyset \]
  where both rate constants are equal to 1. The associated Hamiltonian is
  \[ H (x, p) = x (e^{- p} - 1) + e^p - 1. \]
\end{example}

We will refer to Example \ref{ex:guiding_example} to illustrate various objects introduced in this note.

\section{From CRNs to
Hamilton-Jacobi-Bellman}\label{sec:ldp_to_classical_mechanics}

In this section we sketch the connection between CRNs and the
Hamilton-Jacobi-Bellman equation by appealing to large deviations theory. We begin in Section \ref{sec:roadmap} by providing a roadmap for the reader which identifies the important definitions and the equations relating them. The key idea is that in the rare event regime, the qualitative prediction of an event is governed asymptotically by its cost. The difficulty is that evaluating the cost of an event is in general a non-convex infinite-dimensional optimization problem. 

The remaining subsections provide greater detail on parts of the roadmap. Section \ref{sec:crn_to_lln} focuses on the law of large numbers / deterministic dynamics in terms of the Hamiltonian; Section \ref{sec:lln_to_ldp} provides a brief overview of large deviations theory, and \ref{sec:ldp_to_hjb} explores the calculus of variations and HJB approaches to the optimization problem. 

In the end we cover only the very basic concepts and omit many
important technical details, such as the existence and uniqueness of
minimizers of variational problems and the notion of spatial derivatives on
the boundary. The interested reader can consult \cite{Varadhan08} or \cite{DupuisPadova} for an introduction to large deviations. 

\

\subsection{Roadmap}\label{sec:roadmap}

\


We associate to any CRN $G$ a sequence of pure jump Markov processes $\{ X^n,
n \geqslant 1 \}$ defined on a fixed time interval $[0, T]$. Each process $\{
X^n (t), t \in [0, T] \}$ is an element of the space of right-continuous
trajectories with left limits known as Skorokhod space, which we denote $D_T
\doteq D ([0, T] : \mathcal{S})$. It will turn out that under appropriate
conditions, the sequence $\{ X^n, n \geqslant 1 \}$ satisfies a Large
Deviations Principle on $D_T$. Roughly speaking, such a ``path-space'' LDP  for $\{X^n\}$ implies that there is a
function $I_T : D_T \rightarrow \infty$ such that
\begin{equation}
  \mathbb{P} (X^n \in A) \approx \inf_{\varphi \in A} e^{- n I_T (\varphi)} .
  \label{eqn:ldp_prob_estimate}
\end{equation}
for ``nice enough'' $A \subset D_T$. In other words, large
deviations provides a relationship between the \emph{probability}
$\mathbb{P} (X^n \in A)$ of an event $A$ and its \emph{cost}
$\inf_{\varphi \in A} I_T (\varphi)$. The more ``expensive'' an event, the
less likely it is. In the case of CRNs, the function $I_T$ will be of the form
of an \emph{action functional},
\[ I_T  (\varphi) = \int_0^T L (\varphi (t), \dot{\varphi} (t)) \text{dt}, \]
where $L (x, \beta)$ is real-valued and non-negative, defined for $(x, \beta)
\in \mathbb{R}^d_+ \times \mathbb{R}^d$, possibly taking the value $+ \infty$,
and $L (x, \cdot)$ is convex for each $x \in \mathbb{R}^d_{> 0}$. The function
$L$ will be called the \emph{local rate} or \emph{Lagrangian}, and the integral of $L$ over a trajectory will be called the $I_T (\varphi)$ the \emph{cost} of a trajectory. Under general conditions (which hold for CRNs), if $I_T(\varphi) = 0 $ then $\varphi$ satisfies the deterministic dynamics.

It is in general difficult to evaluate $\inf_{\varphi \in A} I_T (\varphi)$
for general $A \subset D_T$. A well-studied class of events $A \subset D_T$ is
the set of trajectories satisfying \emph{finite time endpoint
constraints}: for any $x, y \in \mathcal{S}$ and $T > 0$, we consider
\begin{equation}
  A_T (x, y) = \{ \varphi \in D_T : \varphi (0) = x, \varphi (T) = y \} .
  \label{eqn:A_T}
\end{equation}
The problem of estimating the exponential decay rate of the probability
$\mathbb{P} (X^n \in A_T (x, y))$ is reduced by the large deviations relation
in {\eqref{eqn:ldp_prob_estimate}} to solving the \emph{calculus of
variations} problem
\[ \inf_{\varphi} \left\{ \int_0^T L (\varphi (t), \dot{\varphi} (t))
   \text{dt} : \varphi (0) = x, \varphi (T) = y \right\} .
   \label{eqn:calculus_of_variations_problem} \]
A standard way to identify the minimizers of a calculus of variations problem
is to consider the \emph{Euler-Lagrange} equation
\[ \frac{d}{d t} \left( \frac{\partial L}{\partial \beta} (\varphi (t),
   \dot{\varphi} (t)) \right) = \frac{\partial L}{\partial x}, \]
with boundary conditions $\varphi (0) = x, \varphi (T) = y$. One can express
the Euler-Lagrange equation in the ``Hamiltonian formalism'' by introducing
the \emph{Hamiltonian}
\begin{equation}
  H (x, p) = \sup_{\beta \in \mathbb{R}^d} \{ \langle p, \beta \rangle - L (x,
  \beta) \} . \label{eqn:hamiltonian_legendre}
\end{equation}
Formally, by introducing the variable $p (x, \beta) = \partial L (x, \beta) /
\partial \beta$ and separating the second order equation into a pair of two
first order equations, we obtain \emph{Hamilton's equations of motion}
\begin{eqnarray}
  \dot{x} (t) & = & D_p H (x (t), p (t)), 
  \label{eqn:equation_of_motion_1}\\
  \dot{p} (t) & = & -D_x H (x (t), p (t)), 
  \label{eqn:equation_of_motion_2}
\end{eqnarray}
with boundary conditions given by $x (0) = x$ and $x (T) = y$. If $(x (\cdot),
p (\cdot))$ is a pair of solutions to the equations of motion
{\eqref{eqn:equation_of_motion_1}}-{\eqref{eqn:equation_of_motion_2}},
then $\{ x (t) : t \in [0, T] \}$ is (under suitable conditions) a trajectory
which achieves the minimum in the variational problem $\mathcal{J}_T (x, y)$. (For further reading, consider p. 255, Section D: \emph{The Hamilton-Jacobi equation} in \cite{Arnold}.)

Observe that while a solution $(x (\cdot), p (\cdot))$ to the equation of motion can in
principle be used to evaluate the cost $\inf_{\varphi} I_T (A_T (x, y))$ by
evaluating $I_T (x (\cdot))$, it provides more information than needed for the
task of evaluating the decay rate of the event, as in
{\eqref{eqn:ldp_prob_estimate}}. For that, one only needs \emph{the}
\emph{minimizing value}, not \emph{a} \emph{minimizer}.

Consider the idea of parametrizing the cost $\mathcal{J}_T (x_0, x)$ for some
fixed initial position $x_0$ in terms of the terminal condition $x \in
\mathbb{R}^d_{\geqslant 0}$ and the terminal time $T > 0$. \ We obtain a
function $V (T, x) = \mathcal{J}_T(x_0, x)$ which formally solves the Hamilton-Jacobi-Bellman (HJB)
partial differential equation (PDE)
\begin{equation}\label{eqn:hjb_plus} 
\partial_T V (T, x) + H (y, D_x V (T, x)) = 0, 
\end{equation}
with boundary condition $V (0, x) = 0$ if $x = x_0$ and $V (0, x) = + \infty$
otherwise. The function $V (T, x)$ represents \ ``the cost to move from $x_0$ to
$x$ in time $T$''. Under additional conditions on $x_0$ which will be discussed in Section \ref{sec:quasipotential}, if one instead solves the time-independent (or
\emph{steady state}) HJB,
\[ H (x, D_x V (x)) = 0, \]
with boundary condition $V (x) = 0$ for $x = x_0$, then the solution $V (x)$
represents ``the cost to move from $x_0$ to $x$ in an arbitrarily long but
finite amount of time''. \\

To summarize, 
\begin{enumerate}
\item the exponential decay rate of the probability of an event with LLN scaling is governed by the cost of the event;
\item evaluating the cost of an event can be seen as a calculus of variations problem with constraints;
\item for the event of moving between one point and another in a fixed amount of time there is a PDE for the cost;
\item the steady-state form of the PDE corresponds to the cost of an event over an ``arbitrarily long but finite'' time horizon. 
\end{enumerate}

\subsection{Stochastic dynamics and its Law of Large Lumbers
}\label{sec:crn_to_lln}

We now provide the details for the steps outlined at the beginning of the
section, starting with a more precise definition of the stochastic dynamics
and the associated law of large numbers.

\

Let $G = (\mathcal{A}, \mathcal{C}, \mathcal{R})$ be a $d$-dimensional CRN
with associated rate constants $\kappa = \{ \kappa_{a, b} : (a, b) \in
\mathcal{R} \}$, and choose any $n \geqslant 1$. This will be the scaling
parameter used for a law of large numbers. It is intended to represent the
volume of the container. For $x \in n^{- 1} \mathbb{Z}^d_+$, we define
\[ \lambda^n_{a, b} (x) = \frac{\kappa_{a, b}}{n^{| a |}}  \prod_{i = 1}^d
   \frac{(n x_i) !}{(n x_i - a_i) !} 1 \{ n x_i \geqslant a_i \} \]
and
\[ \lambda_{a, b} (x) = \kappa_{a, b} x^a . \]
Note that if $x \in \mathbb{R}^d_{\geqslant 0}$ and $x_n \in n^{- 1}
\mathbb{Z}^d_+$, $x_n \rightarrow x$, then $\lambda^n_{a, b} (x_n) \rightarrow
\lambda_{a, b} (x)$.

\

For any fixed $T > 0$, we associate to $G$ a sequence of jump Markov
processes $\{ X^n (t), t \in [0, T] \}_{n \geqslant 1}$ with generator
\[ \mathcal{L}^n f (x) = \sum_{(a, b) \in \mathcal{R}} n \lambda^n_{a, b}
   (x) (f (x + \zeta_{a, b} / n) - f (x)) . \]
We assume that $X^n (0) \in n^{- 1} \mathbb{Z}^d$, and consequently $X^n$ is a
$D ([0, T] : n^{- 1} \mathbb{Z}^d)$-valued random variable. The family of
generators associated to the CRN in Example \ref{ex:guiding_example} is
\[ \mathcal{L}^n f (x) = n x 1_{\{ n x \geqslant 1 \}} \left( f \left( x -
   \frac{1}{n} \right) - f (x) \right) + n \left[ f \left( x + \frac{1}{n}
   \right) - f (x) \right] . \label{ex:generator} \]

It turns out that $X^n$ satisfies a Law of Large Numbers (LLN), that is, there
is a function $x (\cdot) : [0, T] \rightarrow \mathcal{S}$ such that, for any
$\delta > 0$,
\begin{equation}
  \mathbb{P} \left(\sup_{0 \leqslant t \leqslant T} | X^n (t) - x (t) | > \delta\right)
  \xrightarrow{n \rightarrow \infty} 0. \label{eqn:lln_probability}
\end{equation}
We refer to the limit in {\eqref{eqn:lln_probability}} as Kurtz's theorem
{\cite{KurtzLLN77}}{\cite{KurtzCRN72}}. One can formally guess the LLN limit
$x (\cdot)$ by the heuristic calculation
\[ \lambda_{a, b}^n (x)  \left( \frac{f (x + \zeta_{a, b} / n) - f (x)}{1 / n}
   \right) \approx \lambda_{a, b} (x)  \langle D f (x), \zeta_{a, b}
   \rangle, \]
for $f \in C^1 (\mathbb{R}_+) .$ It turns out that $x (\cdot)$ is absolutely
continuous and satisfies $\dot{x} (t) = D_p H (x (t), 0)$, where
\begin{equation}
  D_p H (x (t), 0) \doteq \sum_{(a, b) \in \mathcal{R}} \lambda_{a, b}
  (x (t)) \zeta_{a, b} . \label{eqn:deterministic_dynamics}
\end{equation}
We call {\eqref{eqn:deterministic_dynamics}} the \emph{deterministic
dynamics} of a CRN. For the network in Example \ref{ex:guiding_example}, we
find
\[ D_p H (x, 0) = 1 - x. \]
\subsection{Large devations: LLN's rate of convergence}\label{sec:lln_to_ldp}

Whenever one has a sequence of random variable satisfying a law of large
numbers, one can ask whether the sequence satisfies a large deviations
principle. Roughly speaking, a large deviations principle identifies the
\emph{rate of decay} in $n$ of the probabilities
{\eqref{eqn:lln_probability}}.

In this section we define rate functions and a large deviations principle in
abstract terms. We let $\mathcal{X}$ denote a complete, seperable metric
space, also known as a Polish space.

\begin{definition}
  A function $I : \mathcal{X} \rightarrow [0, \infty)$ is a {\emph{rate
  function}} if for each $M < \infty$, the sublevel set $\{ x \in \mathcal{X}:
  I (x) \leqslant M \}$ is compact. 
\end{definition}

We recall that a having {\emph{closed}} sublevel sets is equivalent to being
lower semicontinuous, and that any lower semicontinuous function achieves its
minimum on a compact set. It follows that a rate function achieves its minimum
on any sublevel set.

\begin{definition}
  A sequence $Y^n$ satisfies a {\emph{Large Deviations Principle (LDP)}} with
  rate function $I$ and speeed $n^{- 1}$ if, for any open set $E \subset
  \mathcal{X}$, we have the large deviations lower bound
  \[ \liminf_{n \rightarrow \infty} \frac{1}{n} \log \mathbb{P} (Y^n \in E)
     \geqslant - \inf_{y \in E} I (y) \]
  and for any closed set $F \subset \mathcal{X}$, we have the large deviations
  upper bound
  \[ \limsup_{n \rightarrow \infty} \frac{1}{n} \log \mathbb{P} (Y^n \in F)
     \leqslant - \inf_{y \in F} I (y) . \]
  We define, for any set $A \subset \mathcal{X}$,
  \[ I (A) \doteq \inf_{y \in A} I (y), \]
  and we will say that an event is {\emph{rare}} if $I (A) > 0$. 
\end{definition}

\subsubsection{Large deviations for CRNs}

There are at least two obstacles when trying to establish a large deviations
principle for chemical reaction networks. First, one must place appropriate
assumptions on the jump intensities to ensure that they do not diverge ``too
quickly'' and that there is an appropriate controllability condition; see
{\cite{Agazzi18}} for such a result. Another important issue is boundary
behavior: one needs to account for the possibility of species vanishing. A
proof of the LDP which accounts for this kind of behavior has recently been proposed by
{\cite{RengerLDP18}}.

The result is likely to hold on the space $D ([0, T])$ with the usual
Skorokhod topology under milder conditions than what has been established as
of yet. If the result does hold, then one can be confident the rate function
will be
\[ I_T (\varphi) = \left\{\begin{array}{ll}
     \int_0^T L (\varphi (s), \dot{\varphi} (s)) \text{ ds } & \text{ if } \varphi
     \text{ is } \text{AC}\\
     + \infty & \text{else}
   \end{array}\right., \]
where for $(x, \beta) \in \mathbb{R}^d_+ \times \mathbb{R}^d$,
\[ L (x, \beta) = \inf_{u_{a, b}} \left\{ \sum_{(a, b) \in \mathcal{R}}
   \lambda_{a, b} (x) \ell \left( \frac{u_{a, b}}{\lambda_{a, b} (x)} \right)
   : u_{a, b} \geqslant 0 \text{ for } (a, b) \in \mathcal{R}, \beta = \sum_{(a,
   b) \in \mathcal{R}} \zeta_{a, b} u_{a, b} \right\}, \]
and
\[ \ell (x) \doteq x \log x - x + 1, \]
with $\ell (0) = 0$ by convention. We call $I_T (\varphi)$ the \emph{cost}
of the trajectory $\varphi$. An important regularity property of $L$ is that for any $x \in \mathcal{S}$, there is a unique $b(x)$ such that $L(x, b(x)) = 0$, and moreover, the function $b(x)$ is Lipschitz continuous. Indeed, $L (x, \beta) = 0$ if and only if
$u_{a, b} = \lambda_{a, b} (x)$ for all $(a, b) \in \mathcal{R}$, or
equivalently $\beta = \sum_{(a, b) \in \mathcal{R}} \zeta_{a, b} \lambda_{a,
b} (x) = D_p H (x, 0)$. 

Moreover, since $L$ is nonnegative, it must be that any
$\varphi$ which satisfies $I_T (\varphi) = 0$ satisfies the deterministic
dynamics, which are sometimes referred to as the \emph{zero-cost}
dynamics. We remark that the rate function $I_T(\cdot)$ depends on the time interval $[0, T]$ on which the trajectories are defined. 

\subsubsection{Heuristics for large deviations}

Rigorously justifying the form of the large deviations rate function is beyond
the scope of this paper, and we refer the reader interested in the details to
an introductory textbook such as {\cite{SchwartzWeiss95}}. In the interest in
providing some motivation for the form of $L$ and $H$, we provide two examples
with heuristic calculations.

\begin{example}
  \label{ex:cramer}Here is a basic calculation that motivates the definition
  of the Hamiltonian and the Legendre transform. Suppose $\{ \xi_i \}_{i =
  1}^{\infty}$ are i.i.d. random variables with ``light tails''. Then, for
  $\beta > 0$ and any $p \geqslant 0$, we have by Chebyshev's inequality that
  \[ \mathbb{P} \left( \frac{1}{n} \sum_{i = 1}^n \xi_i \geqslant \beta
     \right) =\mathbb{P} \left( e^{p \sum_i \xi_i} \geqslant e^{p n \beta}
     \right) \leqslant \frac{\mathbb{E} \left[ e^{p \sum_i \xi_i}
     \right]}{e^{p n \beta}} = e^{- n (p \beta - H (p))}, \]
  where
  \[ H (p) = \log \mathbb{E} [e^{p \xi_1}] . \]
  We can optimize over the parameter $p$ by taking the infimum on the
  right-hand side with the inequality remaining valid. It can be shown under
  appropriate conditions that, in fact,
  \[ \lim_{n \rightarrow \infty} \frac{1}{n} \log \mathbb{P} \left(
     \frac{1}{n} \sum_{i = 1}^n \xi_i \geqslant \beta \right) = L (\beta) \]
  where $L (\beta) = \sup_{p \in \mathbb{R}} \{ p \beta - H (p) \} .$ This is
  a result covered by what is known as Cram\'er's theorem; see \cite{DemboBook}, p. 26. 
\end{example}

The preceding example allows us to guess the rate function on path space. 
\begin{example}
Consider a stochastic process of the form 
\begin{equation}
  Y^n \left( \frac{i + 1}{n} \right) = Y^n \left( \frac{i}{n} \right) +
  \frac{1}{n} \xi_i, \label{eqn:process}
\end{equation}
where the $\xi_i$ are i.i.d. random variables with ``light tails''. The scaling in {\eqref{eqn:process}} is a LLN-type scaling, and it is
reasonable to expect that
\[ Y^n (t) \xrightarrow{n \rightarrow \infty} t\mathbb{E}X_1, \quad t \in [0,
   1] . \]
The key here is a \emph{time-scale separation} due to
the $1 / n$ scaling. Over a time interval $[t, t + \delta]$, $Y^n (t)$ will
change at most $O (\delta)$ with high probability, since the increments $\xi_i$  are light-tailed (say, sub-Gaussian). For
$\delta > 0$,
\begin{equation}
  \frac{Y^n (t + \delta) - Y^n (t)}{\delta} \approx \frac{1}{n \delta} \sum_{i
  = \lfloor n t \rfloor}^{\lfloor n t + n \delta \rfloor} \xi_i
  \label{eqn:time-scale} .
\end{equation}
For large $n$ and $\delta > 0$ small, we can think of the left-hand side of
{\eqref{eqn:time-scale}} as a sum of $\lfloor n \delta\rfloor$ i.i.d. random
variables. By Cram\'er's theorem,
\begin{equation}
  \mathbb{P} \left( \frac{Y^n (t + \delta) - Y^n (t)}{\delta} \approx \beta
  \right) \approx e^{- n \delta L (\beta)}, \label{eqn:approx2}
\end{equation}
where $L$ is as in Example \ref{ex:cramer}. The random variables in this example are trajectories, and $\beta$ can be interpreted as the
\emph{velocity} of $Y^n$. Heuristically, the probability of $Y^n$ being close to an (absolutely continuous) path $\{\varphi(t), t \in [0, 1]\}$ is, for small $\delta > 0$ an, roughly
\begin{eqnarray*}
  \mathbb{P} (Y^n \approx \varphi) & = & \mathbb{P} (\| Y^n - \varphi
  \|_{\infty} \leqslant \varepsilon)\\
  & \approx & \mathbb{P} (Y^n (j \delta) \approx \varphi (j \delta), j = 0,
  \ldots, \lfloor 1 / \delta \rfloor)\\
  & \approx & \mathbb{P} \left( \frac{Y^n (j \delta + \delta) - Y^n (j
  \delta)}{\delta} \approx \frac{\varphi (j \delta + \delta) - \varphi (j
  \delta)}{\delta}, j = 0, \ldots, \lfloor 1 / \delta \rfloor \right)\\
  & \approx & \prod_{j = 0}^{\lfloor 1 / \delta \rfloor} e^{- n \delta L
  \left( \frac{\varphi (j \delta + \delta) - \varphi (j \delta)}{\delta}
  \right)}\\
  & \approx & e^{- n \int_0^1 L (\dot{\varphi} (s)) \text{ds}} .
\end{eqnarray*}
In the above we used {\eqref{eqn:approx2}} with (essentially) $\beta =
\dot{\varphi}$, and we crucially used independence of the increments to break
up the intersection of events into a product. This example shows one way of guessing the form of the rate function for processes. 
\end{example}

\begin{remark}
If the distribution of the increment $\xi_n$ in the preceding example is taken to be dependent on the previous state $Y_{n-1}$, then under appropriate assumptions the local rate $L$ will be state-dependent as well, and $L(x,\beta)$ can be defined for fixed $x$ by the duality formula {\eqref{eqn:hamiltonian_legendre}},
\begin{equation}\label{eqn:legendre_transform}
 L (x, \beta) = \sup_{p \in \mathbb{R}^d} \{ \langle p, \beta \rangle - H
   (x, p) \}. 
   \end{equation}
\end{remark}
\subsection{From large deviations to
Hamilton-Jacobi-Bellman}\label{sec:ldp_to_hjb}
While a large deviations principle allows one to identify the asymptotic decay rate of any sufficiently nice event $A \subset \mathcal{X}$, it is in general difficult to solve the optimization problem $\inf_{\varphi \in A} I_T(A)$. In this section we elaborate on the details of taking a PDE perspective on the optimization problem for a particular class of events.  The underlying PDE is called the Hamilton-Jacobi-Bellman (HJB) equation, and is sometimes referred to as the \emph{dynamic programming} equation, especially in discrete space and time. The techniques in continuous space and time trace back to Lagrangian and Hamiltonian classical mechanics, as well as optimal control and the calculus of variations. 

We first outline the main ideas. As in Section \ref{sec:roadmap}, consider the family of events parametrized by the initial and terminal conditions,
\[ A_T (x, y) = \{ \varphi \in D ([0, T]) : \varphi (0) = x, \varphi (T) = y
   \} . \]
We let
\begin{equation}
  \mathcal{J}_T (x, y) = \inf_{\varphi \in A_T (x, y)} I_T (\varphi) .
  \label{eqn:J_T}
\end{equation}
Thus, the probability of going from $x \in \mathbb{R}^d$ to $y \in
\mathbb{R}^d$ in time $T > 0$ is roughly
\[ \mathbb{P}_x (X^n (T) \in B_{\delta} (y)) \approx e^{- n\mathcal{J}_T (x,
   y)} . \]
The problem of evaluating $\mathcal{J}_T (x, y)$ is a finite time problem
with endpoint constraints. Roughly speaking, $\mathcal{J}_T (x, y)$ measures
the cost of the stochastic process $X^n$ moving from $x$ to $y$ in time $T$.
The most likely trajectory for $X^n$ to follow starting at $X^n (0) = x$ is
the law of large numbers dynamics, $\dot{x} (t) = D_p H (x (t), 0)$ with
$x (0) = x$, and in this case we expect $X^n (T) \approx x (T)$. The cost of
this trajectory is identically zero: $\mathcal{J}_T (x (0), x (T)) = 0$.

We now sketch the connection of $\mathcal{J}_T(x, y)$ to the HJB PDE. For some fixed $y \in \mathbb{R}^d_{> 0}$ and
all $x \in \mathcal{S}_y$, $t \in [0, T]$, define
\begin{align}\label{eqn:value_function_defn} 
V (t, x) &= \inf_{\varphi} \left\{ \int_t^T L (\varphi (s), \dot{\varphi}
   (s)) \text{ds} : \varphi (t) = x, \varphi (T) = y \right\} \\
   &=\mathcal{J}_{T   - t} (x, y),\nonumber
\end{align}
   and define $V(t, x) = +\infty$ if the set over which the infimum is taken is empty. 
   
   The connection to the PDE is made by observing that if $\varphi$ is an optimal trajectory starting at $(t, x)$ and ending at $(T, y)$, and $\varphi$ passes through $(t+\delta, \varphi(t+\delta))$, then $\varphi$ should also be optimal to move from $(t+\delta, \varphi(t+\delta))$ to $(T, y)$. This is known as the ``principle of optimality''. In other words, we expect
\begin{align*}
V(t, x) &= \min_{\varphi} \bigg\{V(t+\delta, \varphi(t+\delta)) \\
&\quad+\int_t^{t+\delta}L(\varphi(s), \dot\varphi(s)) ds : \varphi(t) = x, \varphi(T) = y \bigg\}.
\end{align*}
By subtracting $V(t, x)$ from both sides, dividing by $\delta$, sending $\delta \rightarrow 0$ and using the Fenchel-Legendre duality \eqref{eqn:legendre_transform}  with $\dot{\varphi}(t)$ in place of $\beta$ and $-D_x V(t, x)$ in place of $p$, we find that $V$ formally satisfies the HJB
\begin{equation}\label{eqn:pde_1}
 \partial_t V (t, x) - H (x, -D_x V (t, x)) = 0 
 \end{equation}
with boundary condition $V (T, x) = 0$ if $x = y$ and $V (T, x) = + \infty$
otherwise. Note that the PDE \eqref{eqn:pde_1} differs by two negative signs from the equation in \eqref{eqn:hjb_plus}, due to the fact that in the present case the terminal time and position are fixed, while in the other case the terminal time and position are the variables. The two choices represent fundamentally different, but related problems.

A precise statement relating the variational definition of $V$ in \eqref{eqn:value_function_defn} to the solution of \eqref{eqn:pde_1}  is difficult since the very notion of solution for HJB equations is itself quite subtle. For instance, the function $V$ need not be differentiable. Crandall and Lions established the notion of \emph{viscosity} solutions to resolve this issue in their seminal work \cite{Crandall83}. The theory is technical and outside the scope (or purpose) of this note, but see:  \cite{Feng06}, \cite{Bardi97}, Theorem 5, p. 128 in \cite{Evans98} for a simplified situation, or \cite{Nyquist14} for more general extensions in the ``infinite horizon'' case (which will be discussed below). We will primarily use the results in \cite{FreidlinWentzell} Chapters 3-5. 

The HJB PDE provides another approach to establishing a LDP (this is outlined in \cite{Feng06}). It is also useful for the design of rare event simulation schemes such as importance sampling or splitting (see \cite{DupuisWang07, DupuisKostas15, DeanDupuis09, DeanDupuis11}). Certain  kinds of rare events, such as situations where a stochastic process ``jumps'' between two metastable states, may occur over very long timescales. In fact, one can prove that under appropriate assumptions on a domain $D$ containing a unique stable steady state, and an appropriately scaled stochastic process $X^n$ satisfying an appropriate ``uniform'' LDP, we have 
\[ \lim_{n\rightarrow\infty} -\frac{1}{n}\log \mathbb{P}_x(X^n(t) \notin D, \text{ for some } t \leq n^k) = V(x),\quad x\in D, \]
for any $k \geq 1$, where $V (x)$ is given by the variational problem
\[ V (x) = \inf_{T < \infty} \inf_{y \notin D} \mathcal{J}_T (x, y). \]
See \cite{Snarski19} for a proof and an application to splitting methods. The function $V (x)$ formally satisfies the \emph{steady-state} HJB equation,
\[ H (x, -D_x V (x)) = 0, \]
with boundary condition $V (x) = 0$ for $x \in \partial D$. The variational formulation $V(x) = \inf_{T < \infty} \inf_{y \notin D} \mathcal{J}_T (x, y)$ allows us to interpret $V(x)$ as being the cost to exit $D$ starting from $x \in D$. 

There is a related problem which considers the cost of hitting a point $x \in D$ starting from the steady state in an arbitrarily long but finite amount of time. Given an appropriate domain $D\subset\mathbb{R}^d$ with a unique stable steady state $c \in D$, the solution $V(x)$ of 
\[ H(x, DQ(x)) = 0 \]
with $Q(c) = 0$ identifies the cost of moving from $c$ to $x\in \{ y : Q(y) \leq \min_{z \in \partial D} Q(z)\}$; see Theorem 4.3 in Chapter 5 of \cite{FreidlinWentzell}.  The solution $Q$ is a classical object known as the Freidlin-Wentzell \emph{quasipotential} and is the central object of study in this note; see the next section. 

We end the section with a conjecture that is related to the ideas discussed in this section but has no relation to the rest of the note. 

\paragraph{A conjecture.}
This conjecture is related to necessary conditions for a trajectory to be a minimizer of the optimization problem $\mathcal{J}_T(x, y)$.   claim that a ``Lagrange multiplier'' approach to the optimization problem $\mathcal{J}_T(x, y)$. The infinite-dimensional analogue of the Karush-Kuhn-Tucker conditions which provide necessary conditions for Lagrange multipliers is the celebrated \emph{Pontryagin Maximum Principle}; see \cite{PontryaginBook}, also \cite{Vinter04} for the same result under weaker conditions. 

We conjecture that if the pair $(x (t), p (t))$ satisfies
\begin{eqnarray*}
  \dot{x} (t) & = & \sum_{(a, b) \in \mathcal{R}} \zeta_{a, b} \lambda_{a, b}
  (x (t)) e^{\langle p (t), \zeta_{a, b} \rangle}\\
  \dot{p} (t) & = & \sum_{(a, b) \in \mathcal{R}} D_x \lambda_{a, b} (x
  (t))  (1 - e^{\langle p (t), \zeta_{a, b} \rangle}),
\end{eqnarray*}
with boundary conditions $x (0) = x$, $x (T) = y$, then $I_T (x (\cdot))
=\mathcal{J}_T (x, y)$.
One cannot apply even the weakest forms of the Pontryagin Maximum Principle since the underlying class of controls does not satisfy the integrability assumptions. This lack of integrability is related to the blowup of $L (x, \beta)$ as $x \rightarrow
\partial \mathcal{S}$ for certain directions $\beta$. We also note that for $x \in
\partial S$ the spatial gradient must be given an appropriate interpretation. 

\section{Quasipotential, Lyapunov functions, and
HJB\label{sec:quasipotential}}

\

We consider a family of variational problems which define an object known as
the \emph{quasipotential}. The quasipotential was first introduced by
Freidlin and Wentzell and is often used to study the long-term dynamics of
stochastic systems.

There are three important results we wish to communicate: the relationship of
the quasipotential to Lyapunov functions, invariant measures, and the
Hamilton-Jacobi-Bellman equation. In Section \ref{sec:quasi_defn}, we define
the quasipotential, as well as the notion of a Lyapunov function, and we show
in Section \ref{sec:quasipotential_lyapunov} that the quasipotential serves as
a Lyapunov function for the noiseless dynamics. In Section
\ref{sec:quasipotential_invariant} we demonstrate how the quasipotential can be deduced
by an appropiately scaled limit of invariant measures for the process $X^n$,
and in Section \ref{sec:quasipotential_hjb} we show how it can instead by
obtained from the HJB PDE.

\subsection{Definitions}\label{sec:quasi_defn}

Recall that $\mathcal{J}_T (x, y)$ denotes the optimal value of a variational
problem, as defined in {\eqref{eqn:J_T}}.

\begin{definition}
  The quasipotential between two points $x, y \in \mathbb{R}^d$ is defined as
  \[ Q (x, y) = \inf_{T < \infty} \mathcal{J}_T (x, y), \]
  with $Q (x, y) = + \infty$ if $y \notin \mathcal{S}_x$ (recall Definition \ref{def:domain}). 
\end{definition}

Thus the quasipotential in some sense measures the ``optimal cost'' of moving
from $x$ to $y$ when allowing for an arbitrarily long but finite amount of
time. Note that the infimum need not be achieved, or if it is achieved the
minimizer need not be unique. \\

Next, we define more precisely a term that has been used throughout the note. 
\begin{definition}[Stable steady state]
A steady state $c$ of the dynamical system $\dot{x}(t) = \Phi(x(t))$ is said to be \emph{stable} if for every neighbourhood $U$ of $c$ there is a smaller neighbourhood $V \subset U$ such that the trajectories of $\dot{x}(t) = \Phi(x(t))$ starting in $V$ converge to $c$ without leaving $U$. We say that a domain $D$ containing $c$ is attracted to $c$ if the trajectories $\dot{x}(t) = \Phi(x(t))$ converge to $c$ without leaving $D$.
\end{definition}

We will assume throughout without further mention that $D$ will denote a domain that is attracted to a stable steady state $c$ under the deterministic dynamics $\Phi(x) = D_p H(x, 0)$, and we refer to the maximal open set of points that are attracted to $c$ as the \emph{domain of attraction}. \\

The study of stability is aided by the notion of a Lyapunov function. 

\begin{definition}[Lyapunov function]
  \label{def:Lyapunov_function}Let $D \subset \mathbb{R}^d_+$ be an open
  subset of $\mathbb{R}^d_+$ and let $\Phi : \mathbb{R}^d_+ \rightarrow
  \mathbb{R}^d$. Suppose that $c \in \mathbb{R}^d_+$ is a stable steady state
  for the system
  \[ \dot{x} (t) = \Phi (x (t)) . \]
  A continuously differentiable function $U : D \rightarrow \mathbb{R}$ is
  called a {\emph{strict Lyapunov function}} for the system $\dot{x} (t) =
  \Phi (x (t))$ at $c$ if $U (c) = 0$, $U (x) > 0$ for all $x \neq c$, and
  \[ \langle D U (x), \Phi (x) \rangle \leqslant 0 \]
  for all $x \in D$, with equality if and only if $x = c$.
\end{definition}

\subsection{Quasipotential and Lyapunov
functions}\label{sec:quasipotential_lyapunov}

In this section we show that the quasipotential serves as a Lyapunov function
for the deterministic dynamics. This can be understood heuristically as
follows. The quasipotential $Q (x)$ represents the minimal ``cost'' to deviate
from a stable steady state $c$ over arbitrarily long but finite time
intervals. The deterministic dynamics have $c$ as a stable steady state, so
any deviation from $c$ necessarily carries a positive cost. The cost increases
the ``further away'' one deviates, so the cost is a natural quantity which
increases as one moves away from $c$, which is precisely a Lyapunov function
type property.

Though this property is implicit in the work of Freidlin and Wentzell, we
provide a proof for completeness.

\begin{proposition}\label{prop:quasipotential_lyapunov}
  Let $c$ be a stable steady state for the system $\dot{x} (t) = D_p H (x (t),
  0)$. The quasipotential $Q (x) \doteq Q (c, x)$ is a Lyapunov function for
  this system in the sense of Definition \ref{def:Lyapunov_function} on the
  domain of attraction $D$ of $c$.
\end{proposition}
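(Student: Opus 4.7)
The plan is to verify the three defining properties of a Lyapunov function from Definition~\ref{def:Lyapunov_function}, namely $Q(c)=0$, $Q(x)>0$ for $x\neq c$, and $\langle DQ(x),\Phi(x)\rangle\leq 0$ with equality iff $x=c$, where $\Phi(x)=D_pH(x,0)$.

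First, $Q(c)=0$ is almost immediate: the constant trajectory $\varphi(s)\equiv c$ is a candidate in the infimum defining $Q(c,c)$, and since $c$ is a steady state, $\Phi(c)=D_pH(c,0)=0$, so $L(c,0)=0$ by the zero-cost characterization of the deterministic velocity stated in Section~\ref{sec:lln_to_ldp}. For the strict positivity $Q(x)>0$ when $x\neq c$, I would argue by contradiction: if $Q(x)=0$, pick a sequence $\varphi_n$ with $\varphi_n(0)=c$, $\varphi_n(T_n)=x$, $I_{T_n}(\varphi_n)\to 0$. The Lipschitz continuity of $b(\cdot)$ from Section~\ref{sec:lln_to_ldp} together with a Gronwall-type bound lets one show that paths of vanishing cost stay uniformly close to the deterministic flow out of $c$, which is the constant trajectory at $c$; this prevents $\varphi_n$ from reaching $x$.

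The crucial inequality $\langle DQ(x),\Phi(x)\rangle\leq 0$ follows from a concatenation argument based on the zero-cost nature of the deterministic flow. Given $x\in D$, let $x(\cdot)$ be the deterministic trajectory starting at $x$, which has zero cost. For any $\varepsilon>0$, pick a path $\varphi_\varepsilon$ from $c$ to $x$ with $I(\varphi_\varepsilon)\leq Q(x)+\varepsilon$, and concatenate it with the piece $\{x(s):s\in[0,t]\}$. The concatenated path goes from $c$ to $x(t)$ at cost at most $Q(x)+\varepsilon$, so $Q(x(t))\leq Q(x)+\varepsilon$; letting $\varepsilon\to 0$ gives $Q(x(t))\leq Q(x)$. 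Dividing by $t$ and sending $t\downarrow 0$ yields $\langle DQ(x),\Phi(x)\rangle\leq 0$ (assuming $Q$ is $C^1$ at $x$, which one takes as a regularity hypothesis or in the viscosity sense).

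For the strict inequality at $x\neq c$, the key tool is the steady-state HJB equation from Section~\ref{sec:ldp_to_hjb}: $Q$ satisfies $H(x,DQ(x))=0$, and we also have $H(x,0)=0$ by the definition \eqref{eqn:hamiltonian} of $H_G$. Thus the convex function $p\mapsto H(x,p)$ has two zeros on the segment from $0$ to $DQ(x)$, so by convexity it is nonpositive on this segment and has a minimum at some interior point $p^*$. Strict convexity of $H(x,\cdot)$ on the stoichiometric subspace (which follows from the reaction vectors spanning $\mathcal{S}_0$) gives $p^*\neq 0$ whenever $DQ(x)$ has a nontrivial component in $\mathcal{S}_0$, and then $\langle D_pH(x,0),DQ(x)\rangle=\langle \Phi(x),DQ(x)\rangle$ equals the directional derivative at $0$ of a strictly convex one-dimensional function whose minimum lies strictly beyond $0$, hence is strictly negative. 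The main obstacle is twofold: establishing enough regularity of $Q$ for $DQ(x)$ to make classical sense in $D\setminus\{c\}$, and ruling out $DQ(x)\in\mathcal{S}_0^\perp$ at points $x\neq c$ (which would trivially make $\langle DQ(x),\Phi(x)\rangle=0$ since $\Phi(x)\in\mathcal{S}_0$); the latter is handled by noting that $Q$ is constant on translates of $\mathcal{S}_0^\perp$ intersected with the compatibility class, so its effective gradient lies in $\mathcal{S}_0$.
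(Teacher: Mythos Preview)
Your proof follows the paper's closely for the first three pieces: $Q(c)=0$ via the constant path, $Q(x)>0$ for $x\neq c$ (the paper argues this more tersely, just observing that any path from $c$ to $x\neq c$ must fail $\dot\varphi=D_pH(\varphi,0)$ on a set of positive measure, but your Gronwall phrasing is equivalent in spirit), and the non-strict decrease $Q(x(t+h))\leq Q(x(t))$ by concatenating a near-optimal path to $x(t)$ with the zero-cost deterministic flow to $x(t+h)$.

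The genuine divergence is in the strictness step. The paper uses the mean value theorem: if $Q(x(t+h))=Q(x(t))$ for some $h>0$ then $\langle DQ(x(\xi)),\dot x(\xi)\rangle=0$ at some intermediate $\xi$, and then asserts that both $\dot x(\xi)$ and $DQ(x(\xi))$ are nonzero away from $c$, concluding that equality cannot occur. As written this implication is incomplete, since two nonzero vectors can certainly be orthogonal. Your route via the HJB identity $H(x,DQ(x))=0$, together with $H(x,0)=0$ and strict convexity of $p\mapsto H(x,p)$ in directions lying in $\mathcal{S}_0$, gives a clean one-variable argument: the convex function $t\mapsto H(x,tDQ(x))$ vanishes at $t=0$ and $t=1$, so its derivative at $0$, which is exactly $\langle\Phi(x),DQ(x)\rangle$, is strictly negative whenever $DQ(x)$ has a nontrivial $\mathcal{S}_0$-component. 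This is more complete on the strictness point, but it imports the fact that $Q$ solves the steady-state HJB in the classical sense, which in the paper's logical order appears only later (Section~\ref{sec:quasipotential_hjb}, citing Freidlin--Wentzell); so you are trading self-containment for rigor here. Both arguments tacitly assume the $C^1$ regularity of $Q$ demanded by Definition~\ref{def:Lyapunov_function} without establishing it.
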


\begin{proof}
  Let $x \in D$. We first show that $Q (x) \geqslant 0$ with equality if and
  only if $x = c$. Since $c$ is assumed to be a steady state, we have $D_p H
  (c, 0) = 0$. By properties of the Legendre transform, $L (c, 0) = 0$, so any
  trajectory which stays at $c$ for a positive amount of time achieves the
  minimum in the variational definition of the quasipotential and incurs zero
  cost. Thus $Q (c) = 0$.
  
  Next, for each $x \in \mathbb{R}^d$ the unique $b (x) \in \mathbb{R}^d$
  such that $L (x, b (x)) = 0$ is $b (x) = D_p H (x, 0)$. Since $c$ is a
  stable steady state, it can be shown that any absolutely continuous
  trajectory $\varphi$ which leads from $c$ to $x$ in time $T$ must
  necessarily satisfy $\dot{\varphi} (t) \neq D_p H (\varphi (t), 0)$ for all
  $t$ in some interval. In particular, $\varphi$ must incur a positive cost
  over some time interval, hence $Q (x) > 0$ for all $x \neq c$.
  
  It remains to show that $Q (x (t + h)) - Q (x (t)) \leqslant 0$ for all $t
  \geqslant 0$ and $h > 0$, with equality if and only if $x (t) = c$. Let
  $\varepsilon > 0$ be arbitrary and $T < \infty$, and let $\varphi \in D ([0,
  T])$ be such that $\varphi (0) = c$, $\varphi (T) = x (t)$, and $I_T
  (\varphi) < Q (x (t)) + \varepsilon$. Define a new trajectory $\psi \in D
  ([0, T + h])$ by concatenating it with $\varphi$ on $[0, T]$ and with the
  trajectory $\{ x (s) : s \in [t, t + h] \}$ on $(T, T + h]$. Since $x
  (\cdot)$ follows the noiseless dynamics, it has zero cost and $I_T (\psi) =
  I_T (\varphi)$. Since $\psi (0) = c$ and $\psi (T + h) = x (t + h)$, it
  follows that $Q (x (t + h)) \leqslant Q (x (t)) + \varepsilon$. Since
  $\varepsilon > 0$ was arbitrary, $Q (x (t + h)) \leqslant Q (x (t))$. See Figure \ref{fig:proof} for a mental picture. 
  \begin{figure}
  \centering
  \includegraphics[scale=0.35]{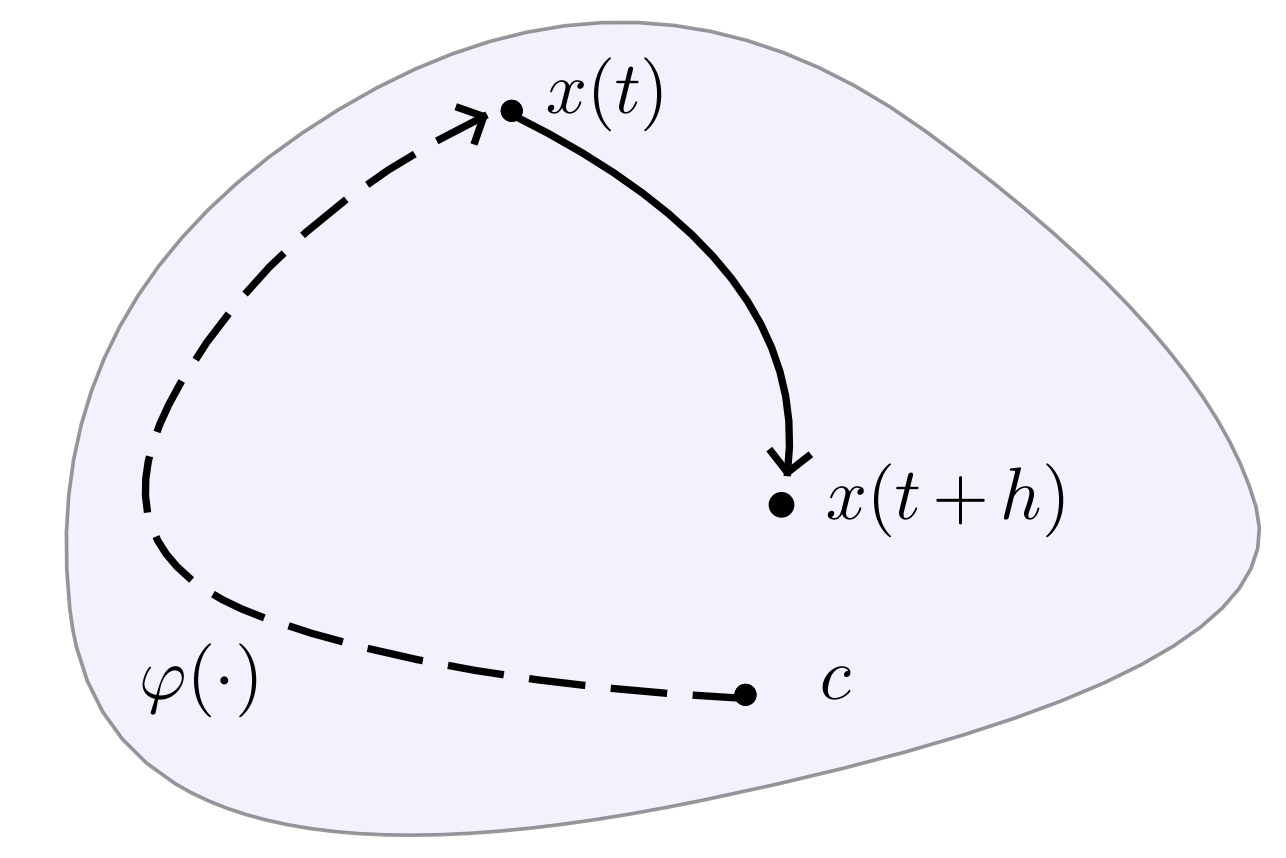}
  \caption{The trajectory $\psi(\cdot)$ defined on $[0, T+h]$ is the concatenation of $\varphi$ on $[0, T]$ with $x(\cdot)$ on $(T, T+h]$. }\label{fig:proof}
  \end{figure}
  
  To rule out the case of equality, note that if there are $t \geqslant 0$ and
  $h > 0$ such that $Q (x (t + h)) = Q (x (t))$, then by the mean value
  theorem there is $\xi \in (t, t + h)$ such that $\langle D Q (x (\xi)),
  \dot{x} (\xi) \rangle h = 0$. Since $c$ is the unique stable steady state in
  $D$, $\dot{x} (\xi) \neq 0$ and $D Q (x (\xi)) \neq 0$ unless $x (\xi) = c$.
  Thus equality cannot hold unless $x (\xi) = c$. This completes the proof. 
\end{proof}

To illustrate the role of the quasipotential as a Lyapunov function, consider
the following example.

\begin{example}
  \label{ex:quasipotential_lyapunov}Consider the reaction network $A
  \rightleftarrows \emptyset$ from Example \ref{ex:guiding_example}. We will
  see in Examples \ref{ex:quasipotential_invariant} and
  \ref{ex:quasipotential_hjb} that the quasipotential for this CRN is $Q
  (x) = x \log x - x + 1$. This is a Lyapunov function for the deterministic
  dynamics $\dot{x} (t) = 1 - x (t)$ relative to the stable steady state at $c
  = 1$. Since $c$ is globally attracting, $Q$ is a Lyapunov function on all of
  $\mathbb{R}_+$. Indeed, $Q (1) = 0$, $Q (x) > 0$ for $x \neq 1$, and
  \begin{eqnarray*}
    \frac{d}{d t} Q (x (t)) & = & \dot{x} (t) \log x (t)\\
    & = & (1 - x (t)) \log x (t) .
  \end{eqnarray*}
  If $x (t) \in [0, 1)$ then $\log x (t) < 0$, whereas if $x (t) \in (1,
  \infty)$ we have $(1 - x (t)) < 0$. In either case, the above expression is
  negative, with equality if and only if $x (t) = 1$.
\end{example}

\subsection{Quasipotential and invariant
measure}\label{sec:quasipotential_invariant}

The following is a classical result of Freidlin-Wentzell theory; see for
instance Theorem 4.3 in Chapter 4 of {\cite{FreidlinWentzell}} for the case of
diffusions. The modification of the proof for jump processes is relatively straightforward, as the estimates depend on ``uniform'' properties of the rate function; see p. 74 of \cite{FreidlinWentzell}. This relationship between the invariant distribution and the quasipotential is what was used to determine the Lyapunov
functions in {\cite{AndersonLyapunov15}}. 

\begin{theorem}
  Let $c$ be the unique stable steady state of the system $\dot{x} (t) = D_p H
  (x (t), 0)$, and suppose that its domain of attraction is all of
  $\mathbb{R}^d$. Assume that for each $n \geqslant 1$, $X^n$ has a unique
  invariant measure $\pi^n$. Then
  \[ \lim_{n \rightarrow \infty} \frac{1}{n} \log \pi^n (A) = - \inf_{x \in A}
     Q (c, x), \]
  where $A \subset \mathbb{R}^d$ is any domain with compact boundary $\partial
  A$ common for both $A$ and the closure of $A$.
\end{theorem}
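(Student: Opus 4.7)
The plan is to follow the classical Freidlin--Wentzell strategy (Chapter 4 of \cite{FreidlinWentzell}), adapted to the jump process setting using the uniform path-space LDP discussed in Section \ref{sec:lln_to_ldp}. The key idea is that because $c$ is globally attracting, the invariant measure $\pi^n$ concentrates near $c$, and a regenerative decomposition of the trajectory expresses $\pi^n(A)$ in terms of excursion probabilities that can be estimated by the LDP. Concretely, fix a small $\rho > 0$, write $B = B_\rho(c) \subset B' = B_{2\rho}(c)$, and define the stopping times $\tau_0 = 0$, $\sigma_k = \inf\{t \geq \tau_k : X^n(t) \notin B'\}$, $\tau_{k+1} = \inf\{t > \sigma_k : X^n(t) \in B\}$. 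Using stationarity of $\pi^n$ and the strong Markov property, one obtains a renewal-type representation
\[
\pi^n(A) \;=\; \frac{\mathbb{E}_{\nu}\!\left[\int_0^{\tau_1} 1_A(X^n(t))\, dt\right]}{\mathbb{E}_{\nu}[\tau_1]},
\]
where $\nu$ is an appropriate entrance distribution on $\partial B$. To leading exponential order only the numerator contributes at rate $-\inf_A Q(c,\cdot)$, since the denominator grows at most polynomially in $n$.

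For the \emph{upper bound}, I would use the LDP upper bound: any excursion from $\partial B'$ that enters the closed set $\bar A$ corresponds to a trajectory whose cost is at least $\inf_{x \in \bar A} \mathcal{J}_T(c,x) - O(\rho)$, so its probability is at most $\exp(-n[\inf_{\bar A} Q(c,\cdot) - \varepsilon])$ for $\rho,\varepsilon$ small. Multiplying by the expected sojourn in $A$ per such excursion (polynomially bounded) and dividing by $\mathbb{E}_\nu[\tau_1]$ gives $\limsup \frac{1}{n}\log \pi^n(A) \leq -\inf_{\bar A} Q(c,\cdot) + \varepsilon$. For the \emph{lower bound}, pick $y \in A^\circ$ nearly minimizing $Q(c,\cdot)$; by the definition of $Q$ there exists a trajectory from $c$ to a neighborhood of $y$ of cost $\leq Q(c,y) + \varepsilon$, and the LDP lower bound says the process traces such a trajectory with probability at least $\exp(-n[Q(c,y) + 2\varepsilon])$. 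Once inside $A^\circ$, a uniform lower bound on the expected time spent before returning to $B$ yields $\liminf \frac{1}{n} \log \pi^n(A) \geq -\inf_{A^\circ} Q(c,\cdot) - 2\varepsilon$. The hypothesis that $\partial A$ is common to both $A$ and $\bar A$ (so no mass hides on the boundary) is precisely what matches $\inf_{A^\circ} Q(c,\cdot) = \inf_{\bar A} Q(c,\cdot)$, letting the upper and lower bounds meet.

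The main obstacle is the \emph{uniformity} of the LDP estimates in the starting point over $\partial B$ and over bounded sets: one needs that the rate function $I_T$ controls probabilities uniformly in the initial condition, and that the quasipotential $Q(c,\cdot)$ is continuous on the interior of $\mathcal{S}$. For jump processes with mass-action kinetics this uniformity is delicate because jump intensities degenerate on $\partial \mathbb{R}^d_+$, so some care is needed to ensure that excursions do not sneak mass along the boundary; this is exactly the boundary issue addressed by \cite{Agazzi18} and \cite{RengerLDP18}. A secondary technical point is checking that the embedded chain on $\partial B$ is sufficiently well-mixing to justify the renewal representation above, which under the hypotheses (unique stable steady state, global attraction) amounts to a standard recurrence argument.
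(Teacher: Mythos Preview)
Your outline is faithful to the classical Freidlin--Wentzell argument, and that is precisely what the paper does: it does not supply its own proof but simply refers the reader to Theorem~4.3 in Chapter~4 of \cite{FreidlinWentzell} for diffusions, remarking that the extension to jump processes is straightforward because the estimates depend only on uniform properties of the rate function (p.~74 of \cite{FreidlinWentzell}). Your regenerative decomposition, the matching of upper and lower bounds via the boundary hypothesis on $A$, and your flagging of the uniformity issue for mass-action kinetics near $\partial\mathbb{R}^d_+$ (citing \cite{Agazzi18,RengerLDP18}) are all in line with what the paper has in mind, so there is nothing to correct.
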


\begin{example}
  \label{ex:quasipotential_invariant}The sequence of processes $\{ X^n, n
  \geqslant 1 \}$ associated to the reaction network $A \rightleftarrows
  \emptyset$ are birth-death process with state-dependent jump rates as
  defined by the generator in Example \ref{ex:generator}, and the density of
  its unique stationary distribution is given by
  \[ \pi^n (x) = e^{- n}  \frac{n^{n x}}{(n x) !}, \quad x \in n^{- 1}
     \mathbb{Z}^d_+ . \]
  Applying the large deviations scaling and using Stirling's formula, $\log k!
  = k \log k - k + O (\log k)$ for $k \geqslant 1$, we find
  \[ \lim_{n \rightarrow \infty} - \frac{1}{n} \log \pi^n (x) = x \log x - x +
     1. \]
\end{example}

We note that Example \ref{ex:quasipotential_invariant} illustrates a technique
which was used to determine \ Lyapunov functions for the deterministic
dynamics in {\cite{AndersonLyapunov15}}.

\subsection{Quasipotential and HJB}\label{sec:quasipotential_hjb}

The quasipotential is related to the HJB in the following way.

\begin{theorem}
  \label{thm:hjb_to_quasipotential} Let $c$ be a stable steady state for the
  noiseless dynamics $\dot{x} (t) = D_p H (x (t), 0)$, and let $D$ denote
  the domain of attraction of $c$. Let $U : D \rightarrow [0, \infty)$ be
  continuously differentiable on $D$ and continuous on its closure $\bar{D}$,
  with $U (x) > 0$ and $D U (x) \neq 0$ for $x \neq c$, and $U (c) = 0,
  D U (c) = 0$. Moreover, suppose that for all $x \in D$,
  \[ H (x, D U (x)) = 0. \]
  Then $U (x) = Q (c, x)$ for all $x \in \{ z : U (z) \leqslant \min_{y \in
  \partial D} U (y) \}$.
\end{theorem}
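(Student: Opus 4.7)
The plan is to establish matching inequalities $U(x) \leq Q(c,x)$ and $Q(c,x) \leq U(x)$ on the prescribed sublevel set. The upper bound is a direct Fenchel-Young computation exploiting the HJB hypothesis. The lower bound is harder: I will produce a near-optimal trajectory from $c$ to $x$ by time-reversing the characteristic flow of the HJB equation. The main obstacle will be showing that this flow actually reaches a neighborhood of $c$ and can be cheaply patched to $c$ itself.

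For the upper bound, fix any absolutely continuous $\varphi : [0,T] \to D$ with $\varphi(0) = c$ and $\varphi(T) = x$. Fenchel-Young duality between the Legendre pair $L(\varphi(s),\cdot)$ and $H(\varphi(s),\cdot)$ gives
\begin{equation*}
\langle DU(\varphi(s)), \dot\varphi(s) \rangle \leq L(\varphi(s),\dot\varphi(s)) + H(\varphi(s), DU(\varphi(s))),
\end{equation*}
and the hypothesis $H(\cdot, DU(\cdot)) \equiv 0$ on $D$ kills the second term. Integrating in $s$ telescopes the left side to $U(x) - U(c) = U(x)$, so $U(x) \leq I_T(\varphi)$. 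Taking the infimum over $\varphi$ and $T$ yields $U(x) \leq Q(c,x)$.

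For the lower bound, consider the characteristic flow
\begin{equation*}
\dot\varphi(t) = -D_p H(\varphi(t), DU(\varphi(t))), \qquad \varphi(0) = x.
\end{equation*}
Since $-\dot\varphi(t)$ realizes the supremum defining $L(\varphi(t),\cdot)$ at the dual variable $DU(\varphi(t))$, the equality case of the Legendre transform together with the HJB identity yields
\begin{equation*}
\frac{d}{dt} U(\varphi(t)) = \langle DU(\varphi), \dot\varphi \rangle = - L(\varphi, -\dot\varphi) \leq 0,
\end{equation*}
with strict inequality away from $c$: indeed $DU(\varphi) \neq 0$ off the equilibrium forces $-\dot\varphi \neq 0 = D_p H(\varphi, 0)$, so $L(\varphi, -\dot\varphi) > 0$. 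Hence $U$ is a strict Lyapunov function for this flow. The sublevel-set hypothesis $U(x) \leq \min_{\partial D} U$ keeps $\varphi(t) \in D$ for all $t \geq 0$, and a standard Lyapunov argument gives $\varphi(t) \to c$.

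Fix $\varepsilon > 0$ and pick $T_\varepsilon$ with $U(\varphi(T_\varepsilon)) < \varepsilon$. Time-reverse by setting $\psi(s) \doteq \varphi(T_\varepsilon - s)$. Then $\dot\psi = D_p H(\psi, DU(\psi))$ again realizes the Legendre equality, so $L(\psi, \dot\psi) = \frac{d}{ds} U(\psi)$ and
\begin{equation*}
I_{T_\varepsilon}(\psi) = U(\psi(T_\varepsilon)) - U(\psi(0)) = U(x) - U(\varphi(T_\varepsilon)) < U(x).
\end{equation*}
The trajectory $\psi$ begins at $\psi(0) = \varphi(T_\varepsilon)$, which is only close to $c$; I will prepend a short connecting trajectory from $c$ to $\psi(0)$ whose cost is $o(1)$ as $\varepsilon \to 0$, using small-time local controllability near the stable equilibrium (available because $L(c,0) = 0$ and $L$ is continuous at $(c,0)$). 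Concatenation yields an admissible path from $c$ to $x$ with total cost at most $U(x) + o(1)$, so $Q(c,x) \leq U(x)$. The main technical obstacle is precisely this patching step: in the Freidlin-Wentzell proof for diffusions one uses Gaussian bounds on the transition density, and here these must be adapted to the jump-process Lagrangian of a CRN. The sublevel-set restriction in the theorem statement is what keeps $\varphi(t)$ bounded away from $\partial \mathbb{R}^d_+$, where $L(x,\beta)$ can blow up in certain directions $\beta$ and would otherwise defeat the controllability estimate.
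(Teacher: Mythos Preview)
The paper does not give its own proof of this theorem; it merely cites Theorem 4.3, Chapter 5 of Freidlin--Wentzell. Your sketch is a faithful outline of precisely that classical argument---the Fenchel--Young inequality for $U\le Q$, and the time-reversed characteristic flow $\dot\psi=D_pH(\psi,DU(\psi))$ plus a near-$c$ patching step for $Q\le U$---and you have correctly identified the patching as the place where the real work lies.
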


The proof of this proposition can be found in Theorem 4.3, Chapter 5 of
{\cite{FreidlinWentzell}}. Since the HJB PDE allows us to identify the
quasipotential and the quasipotential is always a Lyapunov function, the HJB
can be used to identify Lyapunov functions for the noiseless dynamics. The
example below shows how the HJB can be used to identify the Lyapunov function
$x \log x - x + 1$ introduced in Example \ref{ex:quasipotential_lyapunov}.

\begin{example}
  \label{ex:quasipotential_hjb}Consider the time-independent HJB for the CRN
  $A \rightleftarrows \emptyset$,
  \[ H (x, D U (x)) = x (e^{- D U (x)} - 1) + e^{D U (x)} - 1 = 0. \]
  One can check by direct calculation that $U (x) = x \log x - x + 1$
  satisfies $H (x, D U (x)) = 0$ for all $x \in \mathbb{R}_+$. Moreover, $U
  (x) \geqslant 0$ with equality if and only if $x = 1$, and the same is true
  for $D U (x) = \log x$. $U$ is also smooth and the domain of attraction is
  all of $\mathbb{R}_+$, so $U$ satisfies the conditions of Theorem
  \ref{thm:hjb_to_quasipotential} for all $x \in \mathbb{R}_+$ and is equal to
  the quasipotential everywhere. 
\end{example}

\section{Quasipotential for complex-balanced
systems}\label{sec:complex_balanced}

In this section we consider the quasipotential for the class of
``complex-balanced'' reaction networks. We will show that the time-independent
Hamilton-Jacobi-Bellman equation can be used to characterize such networks, as
it will admit a solution of a specific form if and only if the network is
complex-balanced.

\subsection{Complex-balanced networks}

We first define a structural property of reaction networks.

\begin{definition}
  \label{def:weakly_reversible}A CRN $G = (\mathcal{A}, \mathcal{C},
  \mathcal{R})$ is said to be {\emph{weakly reversible}} if for any $(a, b)
  \in \mathcal{R}$, there is a sequence of reactions $(b_0, b_1), (b_1, b_2),
  \ldots, (b_{k - 1}, b_k) \in \mathcal{R}$ such that $b_0 = b$ and $b_k = a$.
  
\end{definition}

We now define complex-balanced networks. The notion was introduced in
{\cite{HornJackson72}} and has been extensively studied since
{\cite{Horn72, Feinberg72, Feinberg95, AndersonProductForm10, AndersonLyapunov15}.}

\

\begin{definition}
  A CRN $G = (\mathcal{A}, \mathcal{C}, \mathcal{R})$ is said to be
  complex-balanced for a choice of rate constants $\{ \kappa_{a, b} : (a, b)
  \in \mathcal{R} \}$ if, for any $z \in \mathcal{C}$,
  \begin{equation}
    \sum_{a : (a, z) \in \mathcal{R}} \kappa_{a, z} c^a = \sum_{b : (z, b) \in
    \mathcal{R}} \kappa_{z, b} c^z . \label{eqn:complex_balanced}
  \end{equation}
\end{definition}

In other words, complex-balanced systems are ones which admit a steady state
where the net flow in each \emph{complex} is zero. This is a stronger
requirement than simply being a steady state, which requires only that the net
flow in each \emph{species} is zero.

There are two important consequences of a network being complex-balancing. The
first is that a complex-balanced network must be weakly reversible in the
sense of Definition \ref{def:weakly_reversible}. The second is that
complex-balanced networks admit a unique steady state in each stoichiometric
compatibility class, and that this steady state is stable. The stability of
the steady state is typically established using the well-known Lyapunov
function
\begin{equation}
  V (c, x) = \sum_{i = 1}^d c_i \ell (x_i / c_i)
  \label{eqn:complex_balanced_hjb_solution},
\end{equation}
where $\ell (x) = x \log x - x + 1$. We will show in the next section that a
stronger characterization of $V$ is that it is a solution to the HJB PDE
corresponding to a complex-balanced reaction network, which by Proposition
\ref{prop:quasipotential_lyapunov} automatically guarantees it is a Lyapunov
function.

\subsection{Main theorems}
We first show that \eqref{eqn:complex_balanced_hjb_solution} is a solution to the HJB if and only if the network is complex-balanced. We note that this has already been observed in \cite{Zhou19}, Theorem 11. There, the approach taken is the typical approximation of the Fokker-Planck equation with a LLN scaling\footnote{This is typically referred to as a ``WKB approximation'': one passes from the Schr\"odinger equation to the HJB equation in the ``small noise'', i.e. classical limit.} 
\begin{theorem}
  \label{thm:complex_balanced_hjb}Let $G = (\mathcal{A}, \mathcal{C},
  \mathcal{R})$ be a reaction network with rate constants $\kappa = \{
  \kappa_{a, b} > 0 : (a, b) \in \mathcal{R} \}$, and let $c$ be a steady
  state in any stoichiometric compatibility class. Then $V (c, x)$ as defined
  in {\eqref{eqn:complex_balanced_hjb_solution}} satisfies $H (x, D_x
  V (c, x)) = 0$ for all $x \in \mathcal{S}$ if and only if the state $c$ is
  complex-balancing. 
\end{theorem}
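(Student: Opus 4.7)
The plan is a direct computation followed by a linear-independence argument. The first step is to compute the gradient of $V$ explicitly. Writing $V(c,x) = \sum_i (x_i \log(x_i/c_i) - x_i + c_i)$, one finds $\partial_{x_i} V = \log(x_i/c_i)$, so that if we set $p = D_x V(c,x)$, then $e^{p_i} = x_i/c_i$. Consequently, for any reaction vector $\zeta_{a,b} = b - a$,
\[
  e^{\langle \zeta_{a,b},\, p\rangle} \;=\; \prod_{i=1}^d (x_i/c_i)^{b_i - a_i} \;=\; \frac{x^b\, c^a}{x^a\, c^b}.
\]

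The next step is to substitute this into the Hamiltonian \eqref{eqn:hamiltonian}. Each term $\kappa_{a,b}\, x^a (e^{\langle \zeta_{a,b},p\rangle} - 1)$ collapses into $\kappa_{a,b}\bigl(x^b c^a / c^b - x^a\bigr)$. Grouping by complex, I would reindex the first piece by the product complex $z = b$ and the second piece by the source complex $z = a$, and rewrite the second piece using $x^z = (x^z/c^z)\, c^z$. Both pieces then carry a common factor $x^z/c^z$, giving
\[
  H(x, D_x V(c,x)) \;=\; \sum_{z \in \mathcal{C}} \frac{x^z}{c^z}\left[\sum_{a:(a,z)\in\mathcal{R}} \kappa_{a,z}\, c^a \;-\; \sum_{b:(z,b)\in\mathcal{R}} \kappa_{z,b}\, c^z\right].
\]
The bracketed quantity is precisely the complex-balancing defect at the complex $z$, cf.\ \eqref{eqn:complex_balanced}. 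Hence if $c$ is complex-balancing, every bracket vanishes and $H(x, D_x V(c,x)) = 0$ identically in $x$.

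For the converse direction, one must extract the complex-balancing conditions from the identity $H(x, D_x V(c,x)) = 0$ on $\mathcal{S}$. The key fact is that the monomials $\{x^z : z \in \mathcal{C}\}$ are linearly independent as functions on $\mathbb{R}^d_{>0}$, since the complexes in $\mathcal{C} \subset \mathbb{Z}^d_+$ are by definition distinct multi-indices. Applying this to the displayed expression forces each bracket to vanish, which is exactly the complex-balancing relation at each $z \in \mathcal{C}$.

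The main subtlety I anticipate is not in the computation itself, which is essentially algebraic, but in the converse: the identity must hold on enough of $\mathcal{S}$ for the linear-independence argument to go through. If $\mathcal{S}$ has lower dimension than $\mathbb{R}^d$ (because of conservation laws from $\mathcal{S}_0$), then strictly speaking one has only an identity on a subset of $\mathbb{R}^d_{>0}$, and one should check that the monomials $\{x^z : z\in\mathcal{C}\}$ remain linearly independent when restricted to $\mathcal{S}$. This is essentially a consequence of the fact that $\mathcal{S}$ contains an open subset of its affine hull, so a polynomial identity there lifts to one on the whole affine hull, and from there to a relation among the multi-indices. Modulo this mild technical point, the proof is a one-line computation and the invocation of distinctness of complexes.
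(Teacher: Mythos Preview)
Your proposal is correct and follows essentially the same route as the paper: compute $D_x V = (\log(x_i/c_i))_i$, substitute into $H$, reindex the resulting sum by complex $z$, and read off the complex-balancing defect as the coefficient of $x^z/c^z$; for the converse, invoke linear independence of the distinct monomials $x^z$. You are in fact slightly more careful than the paper in flagging the possible issue with linear independence of $\{x^z\}$ when restricted to a lower-dimensional $\mathcal{S}$---the paper simply asserts that distinct $z$ give distinct monomials and concludes.
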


\begin{proof}
  The proof is a straightforward calculation. Suppose first that $c \in
  \mathbb{R}^d_{> 0}$ satisfies, for each $z \in \mathcal{C}$, the
  complex-balanced relation
  \[ \sum_{a : (a, z)\in\mathcal{R}} \kappa_{a, z} c^a = \sum_{b : (z, b)\in\mathcal{R}} \kappa_{z, b} c^z
     . \]
  We verify that $V$ satisfies the steady-state HJB by direct calculation.
  \begin{eqnarray*}
    H (x, D V (x)) & = & \sum_{(a, b) \in \mathcal{R}} \kappa_{a, b} x^a 
    (e^{\langle \zeta_{a, b}, D V (x) \rangle} - 1)\\
    & = & \sum_{(a, b) \in \mathcal{R}} \kappa_{a, b} x^a \left( \frac{\left(
    \frac{x}{c} \right)^b}{\left( \frac{x}{c} \right)^a} - 1 \right)\\
    & = & \sum_{(a, b) \in \mathcal{R}} \kappa_{a, b} x^b  \frac{c^a}{c^b} -
    \sum_{(a, b) \in \mathcal{R}} \kappa_{a, b} x^a\\
    & = & \sum_{(a, b) \in \mathcal{R}} \left( \left[ \sum_{z \in
    \mathcal{C}} 1_{\{ b = z \}} \right] \kappa_{a, b} x^b  \frac{c^a}{c^b}
    \right) - \sum_{(a, b) \in \mathcal{R}} \left( \left[ \sum_{z \in
    \mathcal{C}} 1_{\{ a = z \}} \right] \right) \kappa_{a, b} x^a\\
    & = & \sum_{z \in \mathcal{C}} \left[ \sum_{a : (a, z) \in \mathcal{R}}
    \kappa_{a, z} x^z  \frac{c^a}{c^z} - \sum_{b : (z, b) \in \mathcal{R}}
    \kappa_{z, b} x^z \right]\\
    & = & \sum_{z \in \mathcal{C}} \left[ \frac{x^z}{c^z} \sum_{a : (a, z)
    \in \mathcal{R}} \kappa_{a, z} c^a - \sum_{b : (z, b) \in \mathcal{R}}
    \kappa_{z, b} x^z \right]\\
    & = & \sum_{z \in \mathcal{C}} \left[ \frac{x^z}{c^z} \sum_{b : (z, b)
    \in \mathcal{R}} \kappa_{z, b} c^z - \sum_{b : (z, b) \in \mathcal{R}}
    \kappa_{z, b} x^z \right]\\
    & = & 0.
  \end{eqnarray*}
  In the penultimate equality we used the complex-balanced relation.
  
  For the reverse direction, observe that until the penultimate equality we
  did not use the complex-balanced relation. The relation $H (x, D V (x))
  = 0$ for all $x \in \mathcal{S}$ implies
  \[ \sum_{z \in \mathcal{C}} \left[ x^z \left( \sum_{a : (a, z) \in
     \mathcal{R}} \kappa_{a, z}  \frac{c^a}{c^z} - \sum_{b : (z, b) \in
     \mathcal{R}} \kappa_{z, b}  \right)  \right] = 0 \]
  for all $x \in \mathcal{S}$. Since each $z \in \mathcal{C}$ corresponds to a
  different monomial $x^z$, for the above expression to be identically zero on
  all of $\mathcal{S}$, we must have each term in the sum over $z$ being zero.
  Thus
  \[ \sum_{a : (a, z) \in \mathcal{R}} \kappa_{a, z}  \frac{c^a}{c^z} -
     \sum_{b : (z, b) \in \mathcal{R}} \kappa_{z, b} = 0, \]
  which, after multiplying by $c^z$, is the complex balanced relation
  {\eqref{eqn:complex_balanced}}.
\end{proof}

\begin{corollary}
  The network is complex-balanced if and only if $Q (c, x) = V (c, x)$ on a
  neighbourhood of $c$. 
\end{corollary}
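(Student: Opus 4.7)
The plan is to deduce the corollary directly from the two main theorems already stated: Theorem \ref{thm:hjb_to_quasipotential} (HJB solutions identify the quasipotential on a sublevel set) and Theorem \ref{thm:complex_balanced_hjb} ($V$ solves the steady-state HJB if and only if $c$ is complex-balancing).

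For the forward direction, assume the network is complex-balanced at $c$. Theorem \ref{thm:complex_balanced_hjb} gives $H(x, D_x V(c,x)) = 0$ on $\mathcal{S}$. I would then verify the remaining hypotheses of Theorem \ref{thm:hjb_to_quasipotential}: $V(c,c) = 0$ and $V(c,x) > 0$ for $x \neq c$ follow from strict convexity of $\ell$ with $\ell(1) = 0$; $D_x V(c,x) = (\log(x_i/c_i))_i$ vanishes precisely at $x = c$ and is otherwise nonzero; and $V$ is smooth on the positive orthant. Applying Theorem \ref{thm:hjb_to_quasipotential} then yields $V(c,x) = Q(c,x)$ on the sublevel set $\{z : V(c,z) \leq \min_{y \in \partial D} V(c,y)\}$, which contains an open neighbourhood of $c$ since $V$ is continuous with $V(c,c) = 0$.

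For the reverse direction, suppose $Q(c,x) = V(c,x)$ on an open neighbourhood $U$ of $c$. Since $V$ is $C^\infty$ on the positive orthant, so is $Q$ on $U$, and in particular $D_x Q(c,x) = D_x V(c,x)$ there. The quasipotential satisfies the steady-state HJB equation wherever it is smooth (this is part of the classical Freidlin-Wentzell theory underlying Theorem \ref{thm:hjb_to_quasipotential}), so $H(x, D_x V(c,x)) = 0$ on $U$. I would then reverse the calculation in the proof of Theorem \ref{thm:complex_balanced_hjb}: substituting $D_x V(c,x) = (\log(x_i/c_i))_i$ into $H$, the identity reduces to
\[
\sum_{z \in \mathcal{C}} x^z \Bigl( \sum_{a : (a,z) \in \mathcal{R}} \kappa_{a,z} \frac{c^a}{c^z} - \sum_{b : (z,b) \in \mathcal{R}} \kappa_{z,b} \Bigr) = 0
\]
for all $x \in U$. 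Distinct complexes $z \in \mathcal{C}$ correspond to distinct monomials $x^z$, which are linearly independent as polynomials, and a polynomial vanishing on the open set $U$ vanishes identically. Hence each bracketed coefficient is zero, and multiplying by $c^z$ recovers the complex-balanced relation \eqref{eqn:complex_balanced}.

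The main subtlety is the appeal to ``$Q$ solves the HJB where it is smooth.'' In full generality this is a viscosity-solution statement, but here the situation is benign: on $U$ we have $Q = V$ with $V \in C^\infty$, so $D_x Q$ exists classically and the HJB identity is a pointwise equation, not a viscosity one. A second minor point is that the polynomial-identity step only requires $U$ to have nonempty interior in $\mathcal{S}$, which is built into the hypothesis that $U$ is a neighbourhood of $c \in \mathbb{R}^d_{>0}$; if one wished to allow $c$ on the boundary of $\mathcal{S}$, some care with the relative interior would be needed, but for interior steady states the argument is immediate.
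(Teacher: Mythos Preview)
Your proof is correct and follows the same two-direction strategy as the paper's sketch: apply Theorem~\ref{thm:complex_balanced_hjb} together with Theorem~\ref{thm:hjb_to_quasipotential} for one direction, and use that the quasipotential solves the steady-state HJB (the paper cites \cite{Nyquist14}, you invoke classical Freidlin--Wentzell theory) for the other. Your argument is in fact slightly more careful in the ``$Q=V \Rightarrow$ complex-balanced'' direction: the paper applies Theorem~\ref{thm:complex_balanced_hjb} directly even though that theorem's hypothesis is $H(x, D_x V)=0$ on all of $\mathcal{S}$, whereas you correctly note that the polynomial identity on an open set $U$ already forces each monomial coefficient to vanish.
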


\begin{proof}[Sketch of proof.]
  Suppose that $Q (c, x) = V (c, x)$ for all $x$ in some neighbourhood $N$ of
  $c$. By restricting to a smaller neighbourhood if necessary, we may assume that $N$ is contained in the
  domain of attraction of $c$. Since $Q = V$ is the quasipotential, it solves
  the steady-state HJB $H (x, D V (x)) = 0$ for $x \in N$ (see e.g. \cite{Nyquist14}). By Theorem
  \ref{thm:complex_balanced_hjb}, $c$ satisfies the complex-balanced
  relations, and so the network must be complex-balanced.
  
  Conversely, suppose that the network is complex-balanced. Then by Theorem
  \ref{thm:complex_balanced_hjb}, $V$ solves the HJB PDE for all $x \in
  \mathcal{S}_c$. Moreover, $V (c, x)$ is non-negative, and $V (c, x)$ and
  $D V (c, x)$ equal respectively the scalar and vector 0 if and only if
  $x = c$. Thus $V$ satisfies the conditions of Theorem
  \ref{thm:hjb_to_quasipotential}, and so $V$ equals the quasipotential on $N
  = \{ x : V (c, x) < \min_{y \notin \text{int} (\mathcal{S}_c)} V (c, y) \}$.
  (Here, $\text{int} (\mathcal{S}_c)$ refers to the interior of
  $\mathcal{S}_c$.)
\end{proof}

\section{Examples}\label{sec:examples}

\subsection{Calculations in \cite{AndersonLyapunov15}}\label{sec:examples_anderson}

We demonstrate how Theorem 9 in {\cite{AndersonLyapunov15}} can be easily
derived from the theory developed in Section \ref{sec:quasipotential_hjb}. We
formulate the statement in a way that is more natural for the quasipotential.

\begin{proposition}
  Let $G = (\mathcal{S}, \mathcal{C}, \mathcal{R})$ be a one-dimensional
  birth-death CRN with $\mathcal{S}= \{ A \}$ and rate constants $\{
  \kappa_{a, b} \}_{(a, b) \in \mathcal{R}}$. Let $D_c$ denote the domain of
  attraction of $c > 0$. Then, for $x \in D_c$, the gradient of the
  quasipotential relative to $c$ is
  \begin{equation}
    D Q (c, x) = \int_c^x \log \left( \frac{\sum_{(n, n - 1) \in
    \mathcal{R}} \kappa_{n, n - 1} s^n_{}}{\sum_{(n, n + 1) \in \mathcal{R}}
    \kappa_{n, n + 1} s^n} \right) \text{ds} .
    \label{eqn:quasipotential_birth_death}
  \end{equation}
\end{proposition}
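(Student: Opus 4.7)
The plan is to specialize the Hamilton-Jacobi-Bellman framework of Section \ref{sec:quasipotential_hjb} to the one-dimensional setting, where the HJB becomes a scalar algebraic equation in $p = Q'(x)$ that can be solved in closed form. I would then verify the resulting candidate satisfies the hypotheses of Theorem \ref{thm:hjb_to_quasipotential} and conclude that it coincides with the quasipotential on $D_c$. Specifically, write $\alpha(x) = \sum_{(n,n+1) \in \mathcal{R}} \kappa_{n,n+1} x^n$ for the total birth rate and $\beta(x) = \sum_{(n,n-1) \in \mathcal{R}} \kappa_{n,n-1} x^n$ for the total death rate. The Hamiltonian \eqref{eqn:hamiltonian} reduces to
\[ H(x,p) = \alpha(x)(e^p - 1) + \beta(x)(e^{-p} - 1), \]
so that the deterministic dynamics read $\dot{x}(t) = D_p H(x,0) = \alpha(x) - \beta(x)$, and $c$ being a steady state in $D_c$ means $\alpha(c) = \beta(c)$.

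Next I would solve $H(x,p) = 0$ by substituting $u = e^p$, which gives $\alpha(x) u^2 - (\alpha(x)+\beta(x)) u + \beta(x) = 0$, factoring as $(u-1)(\alpha(x) u - \beta(x)) = 0$. The root $u = 1$ corresponds to the trivial zero-cost dynamics $p \equiv 0$; the non-trivial root $u = \beta(x)/\alpha(x)$ yields the candidate
\[ U'(x) = \log\!\left( \frac{\beta(x)}{\alpha(x)} \right) = \log\!\left( \frac{\sum_{(n,n-1) \in \mathcal{R}} \kappa_{n,n-1} x^n}{\sum_{(n,n+1) \in \mathcal{R}} \kappa_{n,n+1} x^n} \right). \]
Integrating from $c$ and using $U(c) = 0$ produces the formula in \eqref{eqn:quasipotential_birth_death}. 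To invoke Theorem \ref{thm:hjb_to_quasipotential} I must verify $U(c) = 0$, $U'(c) = 0$, $U(x) > 0$ and $U'(x) \neq 0$ for $x \in D_c \setminus \{c\}$. The first two are immediate from $\alpha(c) = \beta(c)$. For the sign conditions, stability of $c$ under $\dot{x} = \alpha - \beta$ in the interval $D_c$ forces $\alpha(x) < \beta(x)$ for $x > c$ and $\alpha(x) > \beta(x)$ for $x < c$ (otherwise there would be another equilibrium or the flow would point away from $c$), so $U'$ is strictly positive on $(c, \sup D_c)$ and strictly negative on $(\inf D_c, c)$. Hence $U$ attains a strict minimum at $c$ and is strictly positive on $D_c \setminus \{c\}$.

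The main subtlety is that Theorem \ref{thm:hjb_to_quasipotential} yields $U = Q(c,\cdot)$ only on the sublevel set $\{z : U(z) \le \min_{y \in \partial D} U(y)\}$, which for $D = D_c$ might be a proper subset of $D_c$ (in fact, $U$ may blow up as $x$ approaches $\partial D_c$, e.g.\ at $0$ or at $\infty$). To promote the identification to all of $D_c$ I would exhaust $D_c$ by a nested family of intervals $D_k \Subset D_c$ chosen so that $c \in D_k$ and each $D_k$ is still attracted to $c$ under $\dot{x} = \alpha - \beta$; applying Theorem \ref{thm:hjb_to_quasipotential} to each $D_k$ and observing that the threshold $\min_{\partial D_k} U$ grows without bound as $k \to \infty$ (since $U'$ is bounded away from zero outside any neighborhood of $c$) covers any given $x \in D_c$ for $k$ large enough. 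This extension step, together with the correct choice of root in the quadratic factorization, constitutes the only real obstacle; the rest is calculation.
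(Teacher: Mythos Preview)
Your approach is essentially the same as the paper's: substitute $u=e^{p}$ to reduce $H(x,p)=0$ to a quadratic, select the non-trivial root $u=\beta(x)/\alpha(x)$, and verify the hypotheses of Theorem~\ref{thm:hjb_to_quasipotential} using the sign of $\alpha-\beta$ on either side of the stable equilibrium. The only differences are cosmetic (you factor the quadratic directly rather than using the quadratic formula) or in your favor: your exhaustion argument to push the identification from the sublevel set out to all of $D_c$ is a point the paper's proof does not explicitly address.
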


\begin{proof}
  Fix $c > 0$. The equation $H (x, p (x)) = 0$ can be made algebraic by using
  the substitution $y (x) \doteq e^{p (x)}$ and clearing the denominator of
  $y$'s. For $x \in D_c$, consider
  \begin{eqnarray*}
    K (x, y) & \doteq & y H (x, \log y)\\
    & = & \sum_{(n, n + 1) \in \mathcal{R}} \kappa_{n, n + 1} x^n (y^2 - y) +
    \sum_{(n, n - 1) \in R} \kappa_{n, n - 1} x^n (1 - y) .
  \end{eqnarray*}
  Write $A = A (x) = \sum_{(n, n + 1) \in \mathcal{R}} \kappa_{n, n + 1} x^n$
  and $B = B (x) = \sum_{(n, n - 1)} \kappa_{n, n - 1} x^n$. Then
  \[ K (x, y) = A (x) y^2 - (A (x) + B (x)) y + B (x) . \]
  Viewing this as a quadratic equation in $y$, we find the pair of solutions
  to be
  \[ y_{\pm} = \frac{1}{2 A} \left( A + B \pm \sqrt{(A - B)^2} \right) . \]
  There are four cases and two possible solutions, depending on whether we
  take the $+$ or $-$, and whether $A > B$ or $A \leqslant B$. We have
  
  \begin{center}
    \begin{tabular}{lll}
      & $A > B$ & $A \leqslant B$\\
      $+$ & $1$ & $B / A$\\
      $-$ & $B / A$ & $1$
    \end{tabular}.
  \end{center}
  
  Since $p (x) = \log (y (x))$, we find that a solution to the original
  equation is
  \[ p (x) = \log \left( \frac{B (x)}{A (x)} \right), \]
  which is the same as {\eqref{eqn:quasipotential_birth_death}}. It remains
  only to verify that this solution satisfies the conditions of Theorem
  \ref{thm:hjb_to_quasipotential}. Observe that $\Phi (x) = 0$ if and only if
  $A (x) = B (x)$. If $c$ is stable, then $\Phi (c) > 0$ for $x < c$, $x \in
  D_c$, and $\Phi (c) < 0$ for $x > c, x \in D_c$, which implies respectively
  that $A (x) < B (x)$ and $A (x) > B (x)$. Consequently, $p (x) < 0$ for $x <
  c, x \in D_c$ and $p (x) > 0$ for $x > c, x \in D_c$. This is all that needs
  to be checked.
  
  \ 
\end{proof}

As another illustration, consider Example 13 in {\cite{AndersonLyapunov15}}.

\begin{example}
  Consider the reaction network
  \[ A \rightarrow \emptyset \rightarrow 2 A, \]
  with associated Hamiltonian
  \[ H (x, p) = x (e^{- p} - 1) + e^{2 p} - 1. \]
  This is a cubic equation in $y (x) = e^{p (x)}$, a solution of which is
  \[ y (x) = \frac{1}{2} \left( \sqrt{1 + 4 x} - 1 \right) . \]
  The other solutions are $y (x) = - \left( \sqrt{1 + 4 x} + 1 \right) / 2$
  and $y (x) = 1$, the first of which is not admissible because $y (x) = e^{p
  (x)} \geqslant 0$, while the second corresponds to the deterministic system
  $\dot{x} = \Phi (x)$.
  
  One can conclude that the gradient of the quasipotential $Q$ satisfies
  \[ D Q (x) = \log \left( \sqrt{1 + 4 x} - 1 \right) - \log 2. \]
  Indeed,
  \begin{eqnarray*}
    &  & H (x, D Q)\\
    & = & x \left( \frac{1}{\frac{1}{2} \left( \sqrt{1 + 4 x} - 1 \right)} -
    1 \right) + \left( \frac{1}{2} \right)^2 \left( \sqrt{1 + 4 x} - 1
    \right)^2 - 1\\
    & = & 0.
  \end{eqnarray*}
\end{example}

Our procedure is much simpler than the one outlined in
{\cite{AndersonLyapunov15}}. The same style of calculation can be repeated for
each example in {\cite{AndersonLyapunov15}}, and in each case we find that the
steady-state Hamilton-Jacobi-Bellman equation is satisfied.

\subsection{Examples from symplectic and K{\"a}hler
geometry}\label{sec:examples_kahler}

While the results developed so far clarify why an appropriately scaled limit
of the invariant distribution identifies the quasipotential, they are simply a
recasting of known results in probabilistic terms. We suspect, however, that
there is some underlying geometric structure which describes the level curves
which determine the quasipotential, i.e. $(x, p) : H (x, p) = 0$.

One of the clues we have found most compelling is a series of papers by
Miguel Abreu, which builds on work of Delzant and Guillemin. The
main idea is to view the stoichiometric compatibility class as a convex
polytope, which can then be seen as the image of a moment map of an effective
Hamiltonian action . Theorem 2.8 in {\cite{Abreu08}} then says that there is a
``canonical'' toric complex structure determined by the combinatorial data
which defines the polytope. The associated Riemannian K{\"a}hler metric is then
determined by a potential function, which turns out to be closely related to
the quasipotential.

It is important to note that the quasipotential carries information only about certain aspects of the CRN, since it solves the HJB for only a particular class of infinite-horizon problems. We mention this because of related work on toric varieties in CRNs by \cite{Sturmfels09, ShiuThesis10}. The authors demonstrate that the algebraic relationships between the variables $x$ and rate constants $\kappa$ imposed by the equation $D_p H(x, 0) = 0$ (see \eqref{eqn:deterministic_dynamics}) define an ideal that corresponds to a toric variety if the network is complex-balanced. Though the observations here might be related, we note that two different CRNs can have the same quasipotential, but whose algebraic varieties $D_p H(x, 0) = 0$ can look very different.

We present three examples. The first two illustrate the issue of viewing the
stoichiometric compatibility classes as convex polytopes. We formally change
coordinates to show that the quasipotential for complex balanced networks
agrees with the K{\"a}hler potential on an appropriately reparametrized
polytope. The last example is somewhat unrelated, but provides a ``classical
mechanics'' perspective on the network $A \rightleftarrows \emptyset$. We are
compelled to include this example because there is an elegant transformation
which maps the corresponding Hamiltonian into that of an ``antiharmonic
oscillator''. 

\begin{definition}
  The potential introduced in Example \ref{ex:kahler1} is defined in terms of
  combinatorial data by
  \begin{equation}
    g_P (x) = \sum_{i = 1}^k (\langle \mu_i, x \rangle - \lambda_i) \log
    (\langle \mu_i, x \rangle - \lambda_i), \label{eqn:kahler_potential}
  \end{equation}
  where $\mu_i \in \mathbb{Z}^d$ and $\lambda_i \in \mathbb{R}$. The polytope is the set of points $x$ where $\langle \mu_i, x \rangle \geq
  \lambda_i$,
  \[ P\doteq \{a_i(x) \geq 0, i = 1, \dots, k\}, \quad a_i(x) \doteq  \langle \mu_i, x \rangle - \lambda_i,\]
  and for points in the interior of $P$, $g_P (x)$ is well-defined and smooth. There
  are additional combinatorial assumptions on this setup and the reader is
  referred to {\cite{Abreu08}} for details.
\end{definition}

\begin{remark}
  The potential $g_P$ is strictly convex on the interior of its domain of
  definition, and in particular, $G_P (x) \doteq \text{Hess} (g_P (x))$ is
  positive definite there. In particular, $G_P$ defines a Riemannian metric.
  It is this metric which we suspect is related to the notion of ``cost'' or
  ``distance'' induced by the quasipotential. Note that the Hessian with
  respect to $x$ of $V (c, x)$ as defined in
  {\eqref{eqn:complex_balanced_hjb_solution}} is independent of $c$.
\end{remark}

\begin{example}
  \label{ex:kahler1}Consider the reaction network
  \[ A_1 \rightleftarrows A_2, \]
  with both rate constants set equal to 1. It is easy to verify that this
  network is complex balanced, and that $c = (1, 1)$ is the unique stable
  steady state in its stoichiometric compatibility class $\mathcal{S}_c = \{ x
  \in \mathbb{R}^2_+ : x_1 + x_2 = 2 \}$. The quasipotential relative to $c$
  is
  \[ Q (x) = \ell (x_1) + \ell (x_2) = x_1 \log x_1 + x_2 \log x_2, \]
  where $\ell (z) = z \log z - z + 1$ for $z \geqslant 0$. Since $x_1 + x_2 =
  2$ and $x_i \in [0, 2]$ for $i = 1, 2$, we can parametrize $x = (x_1, x_2)$
  in terms of a single variable, say $y \in [- 1, 1]$. Let $x_1 = 1 - y$ and
  $x_2 = 1 + y$. Then
  \[ Q (x (y)) = (1 - y) \log (1 - y) + (1 + y) \log (1 + y) . \]
  We will compare this network with Example 2.6 in {\cite{Abreu08}}, which we
  transcribe here for convenience. Let $M = S^2 \subset \mathbb{R}^3$ be the
  standard unit two-sphere, equipped with its standard symplectic form, which
  is the standard area form with total area $4 \pi$. Consider the $S^1$ action
  on $M$ given by rotation around any axis. The moment map $\phi : M
  \rightarrow \mathbb{R}$ is the projection map to this axis of rotation, so
  the moment polytope is $P = [- 1, 1]$. This polytope is determined by two
  affine functions,
  \[ a_{- 1} (x) = 1 + x, \quad a_1 (x) = 1 - x, \]
  and the potential function $g_P$ defined on the interior of $P$ is
  \[ g_P (y) = \frac{1}{2} [a_1 (y) \log (a_1 (y)) + a_2 (y) \log (a_2 (y))] =
     \frac{1}{2} Q (y) . \]
\end{example}

\begin{example}
  Consider the network
  \[ A_1 + A_2 \rightleftarrows 2 A_2 \rightleftarrows A_2 + A_3 . \]
  It is easy to verify that this network is also complex-balanced, and that $c
  = (1, 1, 1)$ is the unique stable steady state in it stoichiometric
  compatibility class. The quasipotential on $\mathcal{S}_c = \left\{ x :
  \sum_{i = 1}^3 x_i = 3 \right\}$ is therefore
  \[ Q (x) = x_1 \log x_1 + x_2 \log x_2 + x_3 \log x_3 . \]
  Using the relation $x_1 + x_2 + x_3 = 3$, we can re-express the
  quasipotential in terms of two variables $y_1, y_2$, where $x_3 = 3 - y_1 -
  y_2$. Then
  \begin{eqnarray*}
    Q (x (y)) & = & y_1 \log y_1 + y_2 \log y_2 + (3 - y_1 - y_2) \log (3 -
    y_1 - y_2),
  \end{eqnarray*}
  which is exactly what one would obtain from the K{\"a}hler potential
  {\eqref{eqn:kahler_potential}} using $\mu_1 = (1, 0)$, $\lambda_1 = 0$,
  $\mu_2 = (0, 1)$, $\lambda_2 = 0$, and $\mu_3 = (- 1, - 1)$ and $\lambda_3 =
  - 3$. These three pairs define the convex polytope $\{ y \in \mathbb{R}^2 :
  y_1 \geqslant 0, y_2 \geqslant 0, y_1 + y_2 \leqslant 3 \}$, which is the
  projection of the simplex $\mathcal{S}_c$ onto the $(x_1, x_2)$ plane. 
\end{example}

\

\begin{example}
  In this example we use basic techniques from classical mechanics. A standard
  reference is {\cite{Arnold}}. Consider again the reaction network from
  Example \ref{ex:guiding_example},
  \[ A \rightleftarrows \emptyset, \]
  with both rate constants set equal to 1. The Hamiltonian is
  \[ H (x, p) = x (e^{- p} - 1) + e^p - 1. \]
  We initially sought to identify the minimizers in $A_T (x, y)$, the space of
  trajectories $\varphi$ which go from $x$ to $y$ in time $T$ (see
  {\eqref{eqn:A_T}}), and we conjectured that if $(x (\cdot), p (\cdot))$
  satisfies Hamilton's equations of motion
  {\eqref{eqn:equation_of_motion_1}}-{\eqref{eqn:equation_of_motion_2}}
  then $x (\cdot)$ would be a minimizing trajectory in $A_T (x, y)$. Since the
  Hamiltonian $H$ is ``time-independent'', we have the \emph{conservation
  of energy} relation along curves $(x (\cdot), p (\cdot))$,
  \[ \frac{d}{d t} H (x (t), p (t)) = D_x H (x (t), p (t))  \dot{x} (t) + D_p
     H (x (t), p (t))  \dot{p} (t) = 0, \label{eqn:conservation_of_energy} \]
  which can be seen by using the equations for $\dot{x} (t)$ and $\dot{p}
  (t)$. Thus, we expect the pairs $(x (\cdot), p (\cdot))$ to live on level
  sets $L_E \doteq \{ (x, p) : H (x, p) = E \}$. If we can determine the
  relationship between the level sets and points $(x (t), p (t))$, $t \in [0,
  T]$, then we can have a better understanding of the energy landscape of the
  network.
  
  \paragraph{Harmonic oscillator.}
  
  To illustrate the approach we will take with the CRN, consider perhaps the
  most well-studied Hamiltonian system is the \emph{harmonic oscillator},
  \[ H_{\text{osc}} (x, p) = \frac{x^2}{2} + \frac{p^2}{2}, \]
  with equations of motion
  \begin{eqnarray}
    \dot{x} (t) & = & p  \label{eqn:ho1}\\
    \dot{p} (t) & = & - x.  \label{eqn:ho2}
  \end{eqnarray}
  The level sets of $H_{\text{osc}}$ are ellipses (in this case, circles).
  While the pair {\eqref{eqn:ho1}}-{\eqref{eqn:ho2}} can be solved directly,
  part of the success of classical mechanics relies on parametrizing systems
  in terms of conserved quantities. Consider the change of variables
  \begin{eqnarray*}
    x (E, Q) & = & \sqrt{2 E} \cos (Q) \\
    p (E, Q) & = & \sqrt{2 E} \sin (Q) .
  \end{eqnarray*}
  Let $J = \frac{\partial (x, p)}{\partial (E, Q)}$ denote the Jacobian of
  this transformation. It turns out that the above change of variables is
  \emph{symplectic}, in the sense that
  \[ J^T \Omega J = \Omega, \]
  where
  \[ \Omega = \left(\begin{array}{cc}
       0 & 1\\
       - 1 & 0
     \end{array}\right) . \]
  Symplectic transformations preserve Hamilton's equations, and in this case
  the new Hamiltonian is $H_{\text{new}} (E, Q) = E$.
  
  \paragraph{Antiharmonic oscillator.}
  
  Consider now the \emph{antiharmonic oscillator},
  \[ H_{\text{anti}} (x, p) = \frac{p^2}{2} - \frac{x^2}{2} . \]
  Similarly to the previous example, we can change variables to obtain a new
  Hamiltonian which depends only the ``energy level'' $E$. In this case we
  would use hyperbolic trigonometric functions and see that the level sets are
  hyperbolas.
  
  \paragraph{Back to $A \rightleftarrows \emptyset$.}
  
  The level sets of the Hamiltonian are displayed in Figure
  \ref{fig:level_sets}.
  
  \begin{figure}[h]
  \centering
\includegraphics[scale=0.7]{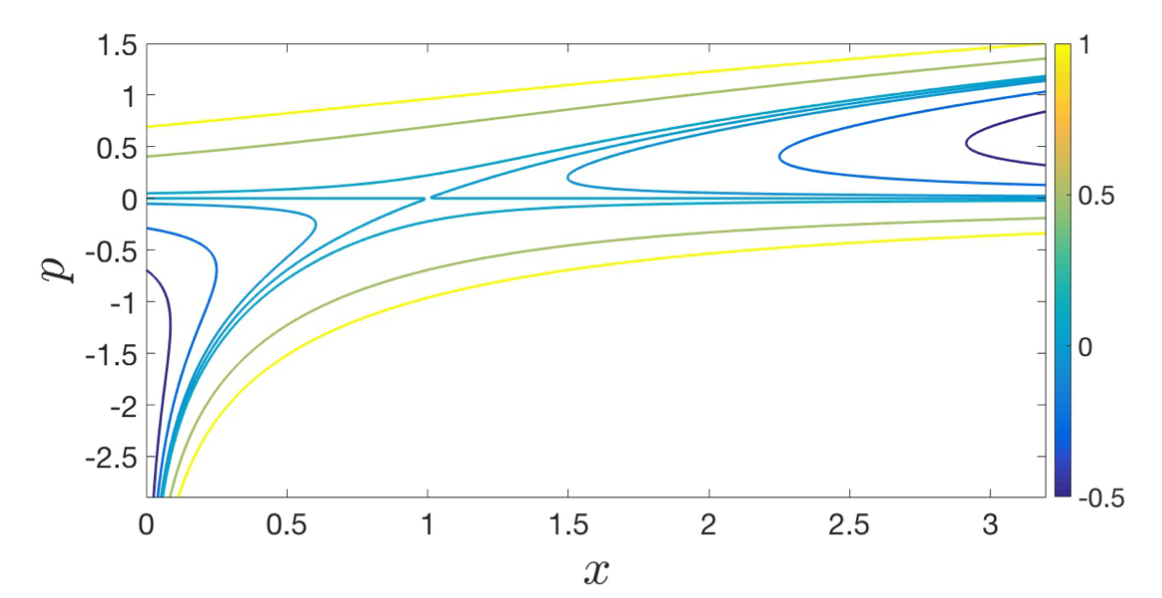}
    \caption{Level sets of the Hamiltonian $H (x, p) = x (e^{- p} - 1) + e^p -
    1$.}\label{fig:level_sets}
  \end{figure}
  
  The level curves corresponding to $H = 0$ look like the asymptotes of a
  hyperbola. We will make this evident. Consider the change of variables
  \begin{eqnarray*}
    x (X, P) & = & (X + 1) (P + 1)\\
    p (X, P) & = & \log (P + 1) .
  \end{eqnarray*}
  It turns out this change of variables is also symplectic, and the new
  Hamiltonian is given by
  \[ H_{\text{new}} (X, P) = - X P. \]
  If $(x, p) \in \mathbb{R}_+ \times \mathbb{R}$, then $(X, P) \in [- 1,
  \infty) \times (- 1, \infty)$. It is straightforward to find a symplectic
  transformation which turns this Hamiltonian into the antiharmonic
  oscillator, though not necessarily with real coordinates. For more details,
  see for instance {\cite{Arnold}}.
\end{example}

\bibliographystyle{unsrt}
\bibliography{main} 

\end{document}